\documentclass[final,hidelinks,onefignum,onetabnum]{siamart251216}

\usepackage[utf8]{inputenc}
\usepackage{amsmath,amssymb,mathtools}
\usepackage{mathrsfs}
\usepackage{bm}
\usepackage{enumitem}
\usepackage{color}
\usepackage{cite}
\usepackage{tikz}
\usetikzlibrary{arrows.meta,positioning,fit,calc}

\newcommand{\E}{\mathbb{E}}
\newcommand{\Var}{\mathrm{Var}}
\newcommand{\Cov}{\mathrm{Cov}}
\newcommand{\R}{\mathbb{R}}

\newcommand{\norm}[1]{\left\lVert #1 \right\rVert}
\newcommand{\abs}[1]{\left\lvert #1 \right\rvert}

\newsiamthm{assumption}{Assumption}
\newsiamremark{remark}{Remark}
\crefname{assumption}{Assumption}{Assumptions}
\Crefname{assumption}{Assumption}{Assumptions}
\crefname{remark}{Remark}{Remarks}
\Crefname{remark}{Remark}{Remarks}

\headers{Neural control variates for kinetic equations}{Wei Chen, Giacomo Dimarco, Lorenzo Pareschi}

\title{Control variates with neural surrogates for uncertainty quantification in kinetic equations}
\author{Wei Chen\thanks{School of Mathematical Sciences, Xiamen University, China (weichenmath@stu.xmu.edu.cn).}
\and
Giacomo Dimarco\thanks{Department of Mathematics and Computer Science, University of Ferrara, Italy (giacomo.dimarco@unife.it).}
\and
Lorenzo Pareschi\thanks{Maxwell Institute for Mathematical Sciences and Department of Mathematics, School of Mathematical and Computer Sciences, Heriot-Watt University, Edinburgh, UK (l.pareschi@hw.ac.uk). Also
	affiliated with Department of Mathematics and Computer Science, University of Ferrara, Italy.}}

\ifpdf
\hypersetup{
  pdftitle={Control variates with neural surrogates for uncertainty quantification in kinetic equations},
  pdfauthor={Wei Chen, Giacomo Dimarco, Lorenzo Pareschi}
}
\fi

\begin{document}

\maketitle

\begin{abstract}
Efficient uncertainty quantification for kinetic equations with random inputs is challenging because it requires repeated simulations of high-dimensional models, such as the Boltzmann, Landau, and related collisional equations, whose computational cost can quickly become prohibitive. Multifidelity control variates address this difficulty by coupling a small number of high-fidelity simulations with many evaluations of lower-complexity reduced models. In this work, we analyze the case in which the reduced model is replaced by a neural surrogate rather than evaluated through a classical numerical scheme. We show that the
resulting estimator remains unbiased and that the change in the optimal
variance induced by the neural approximation is controlled by the error between the exact low-fidelity observable and its
neural approximation.
This estimate is then combined with residual stability estimates for inhomogeneous Fokker--Planck and Bhatnagar--Gross--Krook surrogates. We also extend the analysis to several control variates and to an asymptotic-preserving (AP) hierarchy containing the limiting Euler observable. In the fluid limit, the optimal hierarchical variance converges to the variance associated with the limiting Euler control, while the contribution of the intermediate kinetic correction vanishes. Numerical tests based on micro--macro neural surrogates illustrate the predicted variance stability and the behavior of the AP hierarchy.
\end{abstract}

\begin{keywords}
kinetic equations, uncertainty quantification, multifidelity methods, neural surrogate, physics-informed neural networks, micro--macro decomposition, tensor networks
\end{keywords}

\begin{MSCcodes}
65C05, 65C20, 65M75, 82C40, 68T07
\end{MSCcodes}

\section{Introduction}

The development of efficient numerical methods for kinetic equations with
random inputs is challenging because of the high dimensionality of the
problem and the multiscale nature of the underlying models
\cite{Villani2002survey,DimarcoPareschi2019,DimarcoPareschi2020,
HuJin2016,MedagliaPareschiZanella2023,
DimarcoPareschiZanella2024NFP,dimarco2014numerical}.
A broad class of collisional kinetic equations can be written in the form
\begin{equation}
\partial_t f+v\cdot\nabla_x f+F[f]\cdot\nabla_v f
=
\frac{1}{\varepsilon}\mathcal C(f,f),
\label{eq:kinetic}
\end{equation}
where $f=f(t,x,v;z)$, $z$ denotes the random input and $\varepsilon>0$ is a
Knudsen-type scaling parameter. The force term may be absent, as in the
classical Boltzmann equation, or coupled to the distribution function
through a field equation, as in plasma applications. For Boltzmann and
Landau dynamics, the collision operator $\mathcal C(f,f)$ is nonlinear and
has an integral or integro-differential structure in the velocity
variable \cite{Villani2002survey}. Its evaluation represents a major computational bottleneck
of the numerical solution, a difficulty that becomes particularly
severe in uncertainty quantification, where the kinetic equation has to
be solved for a large number of realizations of the random input \cite{dimarco2014numerical}.

Monte Carlo methods are attractive in this setting, since they can be
combined with existing deterministic solvers and are naturally parallel.
Their slow convergence, however, makes variance reduction essential when
the high-fidelity model is a Boltzmann, Landau, or related collisional
kinetic equation. Multiscale control variate methods reduce this cost by
combining a small number of high-fidelity simulations with many
evaluations of cheaper models that remain strongly correlated with the
quantity of interest. In kinetic equations, these lower-fidelity
descriptions arise naturally from simplified collision mechanisms,
asymptotic approximations, or macroscopic fluid closures
\cite{DimarcoPareschi2019,DimarcoPareschi2020}. This approach fits within
the general framework of multifidelity Monte Carlo methods
\cite{PeherstorferWillcoxGunzburger,
PeherstorferWillcoxGunzburger2018,GorodetskyGeraciEldredJakeman};
related multilevel techniques for the BGK equation were considered in
\cite{HuPareschiWang2021}. We also mention intrusive methods for kinetic equations based on
generalized polynomial chaos and stochastic Galerkin discretizations
\cite{HuJin2016,MedagliaPareschiZanella2023,
DimarcoPareschiZanella2024NFP}.

Neural surrogates provide a further possibility within this framework.
Instead of solving the reduced model for every realization of the random
input, one may train an approximation of the reduced dynamics and use
the corresponding observable as a control variate. This requires a low-fidelity model that is not only inexpensive
and sufficiently correlated with the high-fidelity dynamics, but also
suitable for an efficient residual-based training
\cite{RaissiPerdikarisKarniadakis2019}. A direct neural approximation of the
Boltzmann or Landau equation would require repeated evaluations of the
full nonlinear integral or integro-differential collision operator at
the collocation points and throughout the optimization. We therefore
consider reduced kinetic models based on BGK relaxation or nonlinear
Fokker--Planck operators, whose local or differential structure permits
an efficient evaluation of the residual entering the loss while
retaining the relevant equilibrium and asymptotic properties.

The learned model is then coupled with the conventional high-fidelity
solver through the control-variate estimator, rather than used as its
replacement. Consequently, the neural approximation does not introduce
a bias in the estimate of the high-fidelity expectation: its error
affects instead the efficiency of the variance reduction, through the
correlation between the neural and high-fidelity observables. This
provides a natural way to combine neural surrogates with standard
numerical methods while retaining a quantitative control of their
effect on the estimator.

Neural control variates have also been considered in other Monte Carlo
settings
\cite{WanZhongXiongZhu2020,MullerRousselleKellerNovak2020}.
Neural approximations for kinetic and multiscale equations, including
asymptotic-preserving formulations, have recently been proposed for
several kinetic models
\cite{BertagliaLuPareschiZhu2022,JinMaWu,
LiuWangZhuZhu2025,LiWangLiuWangDong2024,WuYanJinMa2024,LouMengKarniadakis2021,BoisFranckNavoretVigon2022}.
The numerical strategies developed in
\cite{PareschiChenDimarco,ChenDimarcoPareschiSAPNN} showed that
structure-preserving and micro--macro tensor neural surrogates can be
used as effective controls in rarefied-gas and plasma applications. 

The purpose of the present work is to analyze how the error in the
neural approximation of the reduced model propagates to the
multifidelity estimator.
We quantify how the error between the exact reduced
observable and its neural approximation affects the optimal coefficient
and the variance of the multifidelity estimator. Weakly correlated
surrogates are automatically downweighted by the covariance-based
coefficient, whereas sufficiently correlated surrogates can produce a
substantial reduction of variance. The analysis is perturbative with
respect to the low-fidelity control and does not exclude the case, observed
in computations, in which a calibrated or enriched neural surrogate is
more strongly correlated with the high-fidelity observable than the
original low-fidelity model.

A further issue arises in multiscale regimes. When the Knudsen number is
small, the kinetic description approaches a fluid regime and suitable
macroscopic quantities evolve according to a limiting system, such as
the Euler equations. In this case, the variance-reduction mechanism
should remain compatible with the fluid limit. This leads to an
asymptotic-preserving interpretation at the level of the estimator: if
the high-fidelity and low-fidelity observables converge to the same Euler
observable, then the hierarchical control should concentrate on the
limiting level as the Knudsen number tends to zero. This complements the
usual deterministic notion of asymptotic preservation
\cite{DP_EXP,DP_IMEX,DP_Multi}.

We show that replacing an exact low-fidelity observable by its neural
approximation preserves unbiasedness and that the change in the optimal
variance is controlled by their $L^2(\Omega)$ distance. The
result is extended to several control variates. For kinetic quantities
of interest, the observable error is estimated through residual
stability bounds for inhomogeneous nonlinear Fokker--Planck and BGK
reduced models in a perturbative weighted Sobolev setting. The
high-fidelity Boltzmann or Landau equation enters only through the random
observable and its covariance with the controls, while the PDE analysis
concerns the reduced model and its neural approximation. Finally, when the Euler observable is included explicitly in the
hierarchy, we characterize the fluid-limit variance without requiring
uniform nondegeneracy of the covariance matrix. The Euler control
becomes asymptotically dominant, while the contribution of the
intermediate kinetic correction vanishes.

The rest of the paper is organized as follows. Section~2 contains the
probabilistic stability estimate for approximate control variates and
its extension to multiple controls. Section~3 applies the estimate to
Fokker--Planck and BGK reduced models and studies the AP regime.
Section~4 reports numerical evidence of the variance-stability mechanism
and the AP hierarchy. Section~5 concludes the paper.

\section{Multifidelity estimators with neural control variates}\label{sec:setting}

We begin with the probabilistic analysis, independently of the specific kinetic equation. The uncertain input is described by a probability space $(\Omega,\mathcal F,\mathbb P)$, and $z:\Omega\to Z\subseteq\mathbb R^{d_z}$, $d_z\geq1$, denotes the random parameter. Quantities of interest are regarded as random variables in $L^2(\Omega)$. The scalar observables below may represent individual spatial degrees of freedom, cell averages, or components of macroscopic moment fields; norms of the reconstructed expected fields are taken after the statistical estimation step. We quantify how replacing an exact low-fidelity control by an approximate, possibly neural, observable affects the optimal coefficient and variance. The kinetic estimates used to bound the resulting observable error are developed in Section~\ref{sec:applications}.

\subsection{Two-level multifidelity estimator}\label{sec21}
We first recall the standard Monte Carlo setting. Let $Q_H\in L^2(\Omega)$
be a scalar high-fidelity quantity of interest depending on the uncertain
input. The objective is to approximate its expectation $\mathbb E[Q_H]$.
Given $M$ independent realizations $Q_H^{(1)},\ldots,Q_H^{(M)}$, the
standard Monte Carlo estimator is
\[
E_{M}[Q_H]
:=
\frac{1}{M}\sum_{k=1}^{M} Q_H^{(k)} .
\]
It is unbiased and satisfies
\[
\mathbb E[E_{M}[Q_H]]
=
\mathbb E[Q_H],
\qquad
\operatorname{Var}(E_{M}[Q_H])
=
\frac{\operatorname{Var}(Q_H)}{M}.
\]
Thus the statistical error decreases only as $M^{-1/2}$. When $Q_H$
is generated by an expensive kinetic model, such as a Boltzmann or Landau
model, reducing the sampling error by simply increasing $M$ is typically
computationally prohibitive. A control-variate method reduces this cost by exploiting an additional random
variable that is correlated with $Q_H$, but cheaper to evaluate. In a
two-level multifidelity setting, $Q_H$ denotes the high-fidelity observable,
whereas $Q_L$ denotes an ideal low-fidelity control variate. The term
multifidelity refers to the simultaneous use of models or observables with
different levels of accuracy and computational cost: a small number of
high-fidelity samples is combined with many evaluations of a cheaper
low-fidelity model. The low-fidelity observable is not used as a 
replacement for $Q_H$. It is used statistically, through its covariance with
$Q_H$, to reduce the variance of the estimator for $\mathbb E[Q_H]$. This requires coupled evaluations of the high- and low-fidelity observables for
the same realizations of the random input, so that their common fluctuations can
be estimated and subtracted.

The main idea in this work is that the ideal low-fidelity control $Q_L$ can be approximated by a neural surrogate
$Q_N$ obtained from the reduced model. We then ask how this
replacement affects unbiasedness, the optimal control-variate coefficient, and
variance reduction in the multifidelity estimator for $\mathbb E[Q_H]$. In the sequel, we consider high-fidelity, low-fidelity, and neural
quantities of interest generated from the kinetic densities
\cite{PareschiChenDimarco,ChenDimarcoPareschiSAPNN}
\[
f_H(t,x,v;z),\qquad
f_L(t,x,v;z),\qquad
f_N(t,x,v;z),
\]
depending on time $t>0$, phase-space variables
$(x,v)\in\mathbb R^{d_x}\times\mathbb R^{d_v}$, where $d_x$ is the space
dimension and $d_v$ is the velocity dimension, and the uncertain input $z$.
The corresponding scalar observables are
\[
Q_H(z)=\mathcal Q(f_H(\cdot,\cdot,\cdot;z)),\qquad
Q_L(z)=\mathcal Q(f_L(\cdot,\cdot,\cdot;z)),\qquad
Q_N(z)=\mathcal Q(f_N(\cdot,\cdot,\cdot;z)).
\]
The notation above suppresses the variables on which the observable may depend.
For instance, $\mathcal Q$ may be a pointwise evaluation of the distribution
at a fixed $(t,x,v)$, a spatial cell average, or a moment functional evaluated
at a fixed time and position. Here $f_H$ denotes the expensive kinetic model. Its
precise equation is not needed in the analysis below. The high-fidelity model
enters only through the random variable $Q_H$ and through its covariance with
the control variates. Throughout the paper we assume
\[
Q_H,Q_L,Q_N\in L^2(\Omega).
\]
For clarity, we first present the error analysis for a single control variate.
The extension to several simultaneous control variates is treated separately in
Section~\ref{sec:multiple-cv}, where variances and covariances are replaced by
covariance matrices and vectors \cite{DimarcoPareschi2020}.
With this
notation, the control-variate estimator of $\mathbb E[Q_H]$ based on $Q_i$,
$i\in\{L,N\}$, is
\begin{equation}\label{eq:ideal-cv-estimator}
\begin{split}
	\mathcal E_i^\lambda[Q_H]
	&:=
	E_{M_H}[Q_H-\lambda Q_i]
	+
	\lambda E_{M_L}[\widetilde Q_i]\\
	&=
	\frac{1}{M_H}\sum_{k=1}^{M_H}
	\left(Q_H^{(k)}-\lambda Q_i^{(k)}\right)
	+
	\lambda
	\frac{1}{M_L}\sum_{j=1}^{M_L}
	\widetilde Q_i^{(j)} .
	\end{split}
\end{equation}
Here $\{(Q_H^{(k)},Q_i^{(k)})\}_{k=1}^{M_H}$ are i.i.d. samples of the
coupled pair, while $\{\widetilde Q_i^{(j)}\}_{j=1}^{M_L}$ are i.i.d. samples
of $Q_i$, $i\in\{L,N\}$, independent of the first family. Throughout the analysis, the neural surrogate and the coefficient are regarded as fixed independently of these two sample families. If they are constructed from independent training or pilot samples, the formulas below hold conditionally on such data. The parameter $\lambda\in\mathbb R$ is the control-variate coefficient. Its optimal value
is obtained below by minimizing the estimator variance. In a pointwise
formulation it may depend on $(t,x,v)$, whereas for macroscopic moments it
depends on the corresponding moment component and on the remaining physical
variables. In applications, the low-fidelity or neural surrogate is sampled
much more extensively than the high-fidelity model, so that typically
$M_L\gg M_H$. The following standard control-variate identities are recalled for completeness
(see, for instance,
\cite{PeherstorferWillcoxGunzburger2018,GorodetskyGeraciEldredJakeman} for the
general multifidelity setting and
\cite{DimarcoPareschi2019,DimarcoPareschi2020} for kinetic applications).

\begin{proposition}[Two-level estimator and optimal variance]
\label{prop:exact-two-layer-variance}
For every $\lambda\in\mathbb R$, the control-variate estimator in \eqref{eq:ideal-cv-estimator} is unbiased:
\[
\E[\mathcal E_i^\lambda[Q_H]]=\E[Q_H],
\qquad i\in\{L,N\}.
\]
If $\Var(Q_i)>0$, the variance-minimizing control is
\begin{equation}\label{eq:lambda-star}
\lambda_i^*
=
\frac{\Cov(Q_H,Q_i)}{\Var(Q_i)}
\frac{M_L}{M_H+M_L},
\qquad i\in\{L,N\}.
\end{equation}
Consequently, defining the correlation coefficient
\[
\rho_{H,i}:=
\frac{\Cov(Q_H,Q_i)}
{\sqrt{\Var(Q_H)\Var(Q_i)}},
\]
the optimal estimator $\mathcal E_i^{*}[Q_H]:=\mathcal E_i^{\lambda^*_i}[Q_H]$
satisfies
\begin{equation}\label{eq:exact-two-layer-variance}
\Var(\mathcal E_i^{*}[Q_H])
=
\frac{\Var(Q_H)}{M_H}
\left(
1-\frac{M_L}{M_H+M_L}\rho_{H,i}^2
\right),
\qquad i\in\{L,N\}.
\end{equation}

\end{proposition}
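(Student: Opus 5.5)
Unbiasedness follows directly from linearity of expectation. Each sample $(Q_H^{(k)},Q_i^{(k)})$ has the law of $(Q_H,Q_i)$, and each $\widetilde Q_i^{(j)}$ has the law of $Q_i$. Hence
\[
\E[\mathcal E_i^\lambda[Q_H]]=\E[Q_H]-\lambda\E[Q_i]+\lambda\E[Q_i]=\E[Q_H].
\]
Here we use that $\lambda$ and the surrogate are fixed independently of the sample families, as stated after \eqref{eq:ideal-cv-estimator}. All moments are finite since $Q_H,Q_i\in L^2(\Omega)$.

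For the variance, I will use the independence of the two sample families, which kills the cross term. Within each family the samples are i.i.d., so
\[
\Var(\mathcal E_i^\lambda[Q_H])=\frac{\Var(Q_H-\lambda Q_i)}{M_H}+\lambda^2\frac{\Var(Q_i)}{M_L}.
\]
Expanding $\Var(Q_H-\lambda Q_i)=\Var(Q_H)-2\lambda\Cov(Q_H,Q_i)+\lambda^2\Var(Q_i)$ gives the quadratic
\[
g(\lambda)=\frac{\Var(Q_H)}{M_H}-\frac{2\lambda}{M_H}\Cov(Q_H,Q_i)+\lambda^2\Var(Q_i)\Bigl(\frac1{M_H}+\frac1{M_L}\Bigr).
\]
When $\Var(Q_i)>0$ the leading coefficient is strictly positive, so $g$ is strictly convex. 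Its unique minimizer solves $g'(\lambda)=0$:
\[
\lambda\,\Var(Q_i)\,\frac{M_H+M_L}{M_HM_L}=\frac{\Cov(Q_H,Q_i)}{M_H}.
\]
This yields exactly \eqref{eq:lambda-star}.

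Finally, I substitute $\lambda_i^*$ back into $g$. For a quadratic $a-2b\lambda+c\lambda^2$, the minimum value is $a-b^2/c$. Here
\[
a=\frac{\Var(Q_H)}{M_H},\qquad b=\frac{\Cov(Q_H,Q_i)}{M_H},\qquad c=\Var(Q_i)\,\frac{M_H+M_L}{M_HM_L}.
\]
This gives
\[
\frac{\Var(Q_H)}{M_H}-\frac{\Cov(Q_H,Q_i)^2}{M_H\Var(Q_i)}\frac{M_L}{M_H+M_L}.
\]
Factoring out $\Var(Q_H)/M_H$ and writing the ratio in terms of $\rho_{H,i}$ gives \eqref{eq:exact-two-layer-variance}. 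This step implicitly requires $\Var(Q_H)>0$ for $\rho_{H,i}$ to be defined; if $\Var(Q_H)=0$, then $\Cov(Q_H,Q_i)=0$ and both sides vanish, so the identity holds with any convention for $\rho_{H,i}$.

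There is no real obstacle in this argument. The only points needing care are the independence between the high-fidelity pairs and the auxiliary samples $\widetilde Q_i^{(j)}$, which removes the covariance term between the two sample means, and the treatment of the degenerate case $\Var(Q_H)=0$.
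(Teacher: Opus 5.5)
Your proof is correct and follows the paper's argument essentially step for step. Unbiasedness comes from linearity. The split $\Var(Q_H-\lambda Q_i)/M_H+\lambda^2\Var(Q_i)/M_L$ follows from independence of the two sample families, and you then minimize the resulting quadratic and substitute back before rewriting via $\rho_{H,i}$. Your treatment of the degenerate case $\Var(Q_H)=0$ is a small addition that the paper leaves implicit.
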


\begin{proof}
Taking expectations in \eqref{eq:ideal-cv-estimator} gives
\[
\E[\mathcal E_i^\lambda[Q_H]]
=
\E[Q_H-\lambda Q_i]+\lambda\E[Q_i]
=
\E[Q_H].
\]
Since the two sample families in \eqref{eq:ideal-cv-estimator} are independent,
\begin{equation}\label{eq:variance}
\Var(\mathcal E_i^\lambda[Q_H])
=
\frac{1}{M_H}\Var(Q_H-\lambda Q_i)
+
\frac{\lambda^2}{M_L}\Var(Q_i),
\qquad i\in\{L,N\}.
\end{equation}
Expanding the right-hand side gives
\[
\Var(\mathcal E_i^\lambda[Q_H])
=
\frac{\Var(Q_H)}{M_H}
-
\frac{2\lambda}{M_H}\Cov(Q_H,Q_i)
+
\lambda^2
\left(
\frac{1}{M_H}+\frac{1}{M_L}
\right)
\Var(Q_i).
\]
Minimization with respect to $\lambda$ gives \eqref{eq:lambda-star}.
Substituting this value into the variance formula yields
\begin{equation}
\Var(\mathcal E_i^{*}[Q_H])
=
\frac{\Var(Q_H)}{M_H}
-
\frac{M_L}{M_H(M_H+M_L)}
\frac{\Cov(Q_H,Q_i)^2}{\Var(Q_i)}.
\label{eq:optvar}
\end{equation}
The representation \eqref{eq:exact-two-layer-variance} follows from the
definition of $\rho_{H,i}$.
\end{proof}
The quantity $\rho_{H,i}$ measures the strength of the linear statistical
dependence between the high-fidelity observable and the control. Values of
$|\rho_{H,i}|$ close to one correspond to an effective control variate,
whereas values close to zero give little improvement over standard Monte Carlo. When the two observables are strongly correlated, their random fluctuations are
similar and the corrected random variable has a much
smaller variance than $Q_H$ itself. Consequently, for a fixed number of
high-fidelity samples, the statistical error in the estimation of
$\mathbb E[Q_H]$ is reduced; equivalently, a prescribed accuracy can be
achieved with fewer expensive high-fidelity evaluations.

\begin{remark}
\label{rem:variance-loss-correlation}
Formula~\eqref{eq:exact-two-layer-variance} gives the exact difference between
the ideal and approximate optimal variances
\[
\operatorname{Var}(\mathcal E_N^{*}[Q_H])
-
\operatorname{Var}(\mathcal E_L^{*}[Q_H])
=
-\frac{\operatorname{Var}(Q_H)}{M_H}
\frac{M_L}{M_H+M_L}
\left(
\rho_{H,N}^2-\rho_{H,L}^2
\right).
\]
A calibrated or enriched neural surrogate, as shown in~\cite{PareschiChenDimarco, ChenDimarcoPareschiSAPNN}, may therefore outperform the original low-fidelity control variate when $\rho_{H,N}^2>\rho_{H,L}^2$.
\end{remark}

\begin{remark}
The quadratic variance representation~\eqref{eq:variance} also quantifies the effect of using a nonoptimal,
approximate, or estimated control-variate coefficient.
In fact, for every approximation $\tilde \lambda\approx \lambda_i^* $, we have
\begin{equation}
\Var(\mathcal E_i^{\tilde \lambda}[Q_H])
=
\Var(\mathcal E_i^*[Q_H])
+
\frac{M_H+M_L}{M_H M_L} \Var(Q_i) (\tilde \lambda-\lambda_i^*)^2 .
\end{equation}
Indeed, by \eqref{eq:variance},
\[
\Var(\mathcal E_i^{\tilde\lambda}[Q_H])
=
\frac{\Var(Q_H)}{M_H}
-
\frac{2\tilde\lambda \Cov(Q_H,Q_i)}{M_H}
+
\frac{M_H+M_L}{M_H M_L} \Var(Q_i) \tilde\lambda^2,
\]
completing the square and using the definition of $\lambda_i^*$ gives the identity. If $\tilde\lambda$ is estimated from an independent pilot sample, the same identity holds conditionally on the pilot data and isolates the additional variance due to coefficient estimation; see, for example,~\cite{PhamGorodetsky2022}.
\end{remark}

\subsection{Errors induced by the neural approximation}\label{sec:abstract}
We next quantify how the neural approximation affects the correlation
structure with $Q_H$. The relevant error is not the distance between the
neural surrogate and the high-fidelity quantity, but the error made in
replacing the exact low-fidelity control observable by its approximation.
The specific form depends on the neural architecture, the residual and training
errors of the surrogate, the moment evaluation, and possible AP errors. These
aspects are discussed in Section~\ref{sec:observables} and in
Remark~\ref{rem:architecture-residual-errors}; see also
\cite{MishraMolinaro,EirasBibiBunelDvijothamTorrKumar2024,JinMaWu,
LuWangXu2022,ChoNamYangYunHongPark2023,OhChoYunParkHong2025}.

The following elementary estimates are needed
below.

\begin{lemma}[Covariance and variance perturbations]
\label{lem:cov-perturb}
\label{lem:var-perturb}
We have
\begin{equation}\label{eq:cov-perturb}
\abs{\Cov(Q_H,Q_N)-\Cov(Q_H,Q_L)}
\le
\norm{Q_H-\E[Q_H]}_{L^2}\,\norm{Q_N-Q_L}_{L^2}.
\end{equation}
Moreover,
\begin{equation}\label{eq:var-perturb}
\abs{\Var(Q_N)-\Var(Q_L)}
\le
C\norm{Q_N-Q_L}_{L^2}
\bigl(
\norm{Q_L}_{L^2}+\norm{Q_N-Q_L}_{L^2}
\bigr).
\end{equation}
\end{lemma}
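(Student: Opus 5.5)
The plan is to use bilinearity of the covariance together with Cauchy--Schwarz in $L^2(\Omega)$, working throughout with centered variables. Write $\bar Q := Q-\E[Q]$ for any $Q\in L^2(\Omega)$. Then $\Cov(X,Y)=\E[\bar X\bar Y]$, and centering is a linear operation. It is also an orthogonal projection in $L^2(\Omega)$, onto the complement of constants, so
\[
\norm{\bar Q}_{L^2}\le\norm{Q}_{L^2}.
\]

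For \eqref{eq:cov-perturb}, linearity in the second argument gives
\[
\Cov(Q_H,Q_N)-\Cov(Q_H,Q_L)=\Cov(Q_H,Q_N-Q_L)=\E\bigl[\bar Q_H\,(Q_N-Q_L)\bigr].
\]
The last equality holds because $\E[\bar Q_H]=0$, so the centering of the second factor may be dropped. Cauchy--Schwarz then yields directly
\[
\norm{Q_H-\E[Q_H]}_{L^2}\,\norm{Q_N-Q_L}_{L^2}.
\]

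For \eqref{eq:var-perturb}, set $D:=Q_N-Q_L$ and expand
\[
\Var(Q_N)=\Var(Q_L+D)=\Var(Q_L)+2\Cov(Q_L,D)+\Var(D).
\]
Hence
\[
\abs{\Var(Q_N)-\Var(Q_L)}\le 2\abs{\E[\bar Q_L\bar D]}+\E[\bar D^2]\le 2\norm{\bar Q_L}_{L^2}\norm{\bar D}_{L^2}+\norm{\bar D}_{L^2}^2 .
\]
Applying $\norm{\bar Q}_{L^2}\le\norm{Q}_{L^2}$ to both $Q_L$ and $D$ gives the bound with $C=2$:
\[
\abs{\Var(Q_N)-\Var(Q_L)}\le 2\norm{D}_{L^2}\bigl(\norm{Q_L}_{L^2}+\norm{D}_{L^2}\bigr).
\]
Alternatively, one can factor
\[
\Var(Q_N)-\Var(Q_L)=\E[(\bar Q_N-\bar Q_L)(\bar Q_N+\bar Q_L)]
\]
and use $\norm{\bar Q_N+\bar Q_L}\le 2\norm{\bar Q_L}+\norm{\bar D}$, which gives the same constant.

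There is no real obstacle here. The only points needing care are to justify that centering is norm-nonincreasing, so that uncentered norms may appear on the right-hand side, and to record an explicit admissible constant ($C=2$ suffices). All quantities are finite by the standing assumption $Q_H,Q_L,Q_N\in L^2(\Omega)$.
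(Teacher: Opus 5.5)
Your proof is correct and follows the paper's argument essentially line by line. For the covariance bound you use bilinearity plus Cauchy--Schwarz, and for the variance bound you use the expansion $\Var(Q_L+D)=\Var(Q_L)+2\Cov(Q_L,D)+\Var(D)$ followed by Cauchy--Schwarz and the fact that centering does not increase the $L^2$ norm; the explicit constant $C=2$ and dropping the centering on the second factor (since $\E[Q_H-\E[Q_H]]=0$) are only minor refinements of the same route.
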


\begin{proof}
We have
\[
        \Cov(Q_H,Q_N)-\Cov(Q_H,Q_L)
        =
        \Cov(Q_H,Q_N-Q_L).
\]
Hence, by Cauchy--Schwarz, we get  \eqref{eq:cov-perturb}
\begin{align*}
\abs{\Cov(Q_H,Q_N-Q_L)}
&\le
\norm{Q_H-\E[Q_H]}_{L^2}
\norm{Q_N-Q_L-\E[Q_N-Q_L]}_{L^2}\\
&\le
\norm{Q_H-\E[Q_H]}_{L^2}
\norm{Q_N-Q_L}_{L^2}.
\end{align*}
Similarly,
\[
        \Var(Q_N)
        =
        \Var(Q_L)+2\Cov(Q_L,Q_N-Q_L)+\Var(Q_N-Q_L),
\]
and therefore
\[
\abs{\Var(Q_N)-\Var(Q_L)}
\le
2\abs{\Cov(Q_L,Q_N-Q_L)}+\Var(Q_N-Q_L).
\]
Applying Cauchy--Schwarz gives
\begin{align*}
\abs{\Var(Q_N)-\Var(Q_L)}
\le&\,
2\norm{Q_L-\E[Q_L]}_{L^2}\,\norm{Q_N-Q_L-\E[Q_N-Q_L]}_{L^2}\\
&+
\norm{Q_N-Q_L-\E[Q_N-Q_L]}_{L^2}^2.
\end{align*}
The final bound \eqref{eq:var-perturb} follows
from
\[
\norm{Q_N-Q_L-\E[Q_N-Q_L]}_{L^2}\le\norm{Q_N-Q_L}_{L^2},
\qquad
\norm{Q_L-\E[Q_L]}_{L^2}\le\norm{Q_L}_{L^2}.
\]
\end{proof}
We introduce now the following assumption:
\begin{assumption}[Nondegenerate control variate]\label{ass:nondegenerate-variance}
There exists $\sigma_0>0$ such that
\[
        \Var(Q_L)\ge \sigma_0,
        \qquad
        \Var(Q_N)\ge \frac{\sigma_0}{2}.
\]
\end{assumption}

The second lower bound is a perturbative condition. For instance, by
Lemma~\ref{lem:var-perturb}, it follows from
\[
C\norm{Q_N-Q_L}_{L^2}
\bigl(
\norm{Q_L}_{L^2}+\norm{Q_N-Q_L}_{L^2}
\bigr)
\le
\frac{\sigma_0}{2},
\]
because
\[
        \Var(Q_N)
        \ge
        \Var(Q_L)-\abs{\Var(Q_N)-\Var(Q_L)}.
\]
The following theorem shows that the exact variance comparison in
Remark~\ref{rem:variance-loss-correlation} is stable under small perturbations
of the low-fidelity observable.
\begin{theorem}[Variance stability of neural control variates]\label{thm:variance-stability}
Under Assumption~\ref{ass:nondegenerate-variance}, there exists a constant
$C>0$, depending only on
$M_H,M_L,\sigma_0,\norm{Q_H}_{L^2}$, and $\norm{Q_L}_{L^2}$, such that
\begin{equation}\label{eq:variance-stability-optimal}
\abs{\Var(\mathcal{E}_N^{*}[Q_H])-\Var(\mathcal{E}_L^{*}[Q_H])}
\le
C\bigl(
\norm{Q_N-Q_L}_{L^2}+\norm{Q_N-Q_L}_{L^2}^2
\bigr),
\end{equation}
where $\mathcal{E}_L^*$ and $\mathcal{E}_N^*$ denote the estimators
\eqref{eq:ideal-cv-estimator} with the optimal coefficients
$\lambda_L^*$ and $\lambda_N^*$, respectively.
\end{theorem}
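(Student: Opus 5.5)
The plan is to work directly with the explicit optimal-variance formula \eqref{eq:optvar} from Proposition~\ref{prop:exact-two-layer-variance}. The term $\Var(Q_H)/M_H$ is common to both estimators and cancels, leaving
\[
\Var(\mathcal E_N^{*}[Q_H])-\Var(\mathcal E_L^{*}[Q_H])
=
-\frac{M_L}{M_H(M_H+M_L)}
\left(
\frac{\Cov(Q_H,Q_N)^2}{\Var(Q_N)}-\frac{\Cov(Q_H,Q_L)^2}{\Var(Q_L)}
\right).
\]
Writing $c_i:=\Cov(Q_H,Q_i)$ and $s_i:=\Var(Q_i)$, the task reduces to bounding $\abs{c_N^2/s_N-c_L^2/s_L}$ by $C(\delta+\delta^2)$, where $\delta:=\norm{Q_N-Q_L}_{L^2}$. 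Assumption~\ref{ass:nondegenerate-variance} keeps both denominators away from zero.

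I will use the standard splitting
\[
\frac{c_N^2}{s_N}-\frac{c_L^2}{s_L}
=
\frac{(c_N-c_L)(c_N+c_L)}{s_N}
+
c_L^2\,\frac{s_L-s_N}{s_N s_L}.
\]
For the first term, Lemma~\ref{lem:cov-perturb} gives $\abs{c_N-c_L}\le \norm{Q_H}_{L^2}\delta$. Cauchy--Schwarz gives $\abs{c_L}\le\norm{Q_H}_{L^2}\norm{Q_L}_{L^2}$, and hence $\abs{c_N}\le \norm{Q_H}_{L^2}(\norm{Q_L}_{L^2}+\delta)$. With $s_N\ge\sigma_0/2$, the first term is bounded by
\[
\frac{2}{\sigma_0}\norm{Q_H}_{L^2}^2\,\delta\,(2\norm{Q_L}_{L^2}+\delta).
\]
For the second term, \eqref{eq:var-perturb} gives $\abs{s_L-s_N}\le C\delta(\norm{Q_L}_{L^2}+\delta)$. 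Combined with $s_Ns_L\ge\sigma_0^2/2$ and the bound on $c_L^2$, this term is bounded by
\[
\frac{2C}{\sigma_0^2}\norm{Q_H}_{L^2}^2\norm{Q_L}_{L^2}^2\,\delta\,(\norm{Q_L}_{L^2}+\delta).
\]
Both bounds have the form $A\delta+B\delta^2$ with $A,B$ depending only on $\sigma_0$, $\norm{Q_H}_{L^2}$, and $\norm{Q_L}_{L^2}$. Multiplying by the prefactor $M_L/(M_H(M_H+M_L))$ and taking $C$ to be the larger resulting coefficient gives \eqref{eq:variance-stability-optimal}.

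The only delicate point is the choice of splitting. I need the $\delta^2$ contribution to appear with a constant that does not depend on $\norm{Q_N}_{L^2}$ or on a lower bound for $s_N$ beyond $\sigma_0/2$. The splitting above achieves this: every occurrence of $Q_N$ is rewritten as $Q_L+(Q_N-Q_L)$ before it is estimated. I expect the rest to be routine bookkeeping of constants.

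Optionally, I will note that $c_L^2/s_L\le \Var(Q_H)$. This would let me drop one factor of $\norm{Q_L}_{L^2}^2/\sigma_0$ in the second term, but it is not needed for the statement.
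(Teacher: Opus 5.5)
Your proposal is correct and follows the paper's proof essentially step for step. It uses the same cancellation of $\Var(Q_H)/M_H$ via \eqref{eq:optvar}, the same splitting into $(c_N^2-c_L^2)/s_N+c_L^2(s_L-s_N)/(s_Ns_L)$ with $|c_N+c_L|\le 2|c_L|+|c_N-c_L|$, and the same appeal to Lemma~\ref{lem:cov-perturb}. Your explicit constants (for example $|c_L|\le\norm{Q_H}_{L^2}\norm{Q_L}_{L^2}$ and $s_Ns_L\ge\sigma_0^2/2$) simply make precise what the paper leaves as a generic $C$.
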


\begin{proof}
By the optimal variance formula~\eqref{eq:optvar},
it is enough to estimate
\[
        \frac{\Cov(Q_H,Q_N)^2}{\Var(Q_N)}
        -
        \frac{\Cov(Q_H,Q_L)^2}{\Var(Q_L)}.
\]
Set
\[
        A_i:=\Cov(Q_H,Q_i),
        \qquad
        B_i:=\Var(Q_i),
        \qquad i\in\{L,N\}.
\]
Then
\[
\begin{aligned}
\abs{\frac{A_N^2}{B_N}-\frac{A_L^2}{B_L}}
&\le
\frac{\abs{A_N^2-A_L^2}}{B_N}
+
A_L^2
\abs{\frac1{B_N}-\frac1{B_L}}.
\end{aligned}
\]
By Assumption~\ref{ass:nondegenerate-variance}, $B_L$ and $B_N$ are bounded
away from zero. Moreover, Lemma~\ref{lem:cov-perturb} gives
\[
        \abs{A_N-A_L}
        \le
        \norm{Q_H-\E[Q_H]}_{L^2}\norm{Q_N-Q_L}_{L^2}.
\]
Hence
\[
\begin{aligned}
\abs{A_N^2-A_L^2}
=
\abs{A_N-A_L}\,\abs{A_N+A_L}
\le
C\bigl(
\norm{Q_N-Q_L}_{L^2}+\norm{Q_N-Q_L}_{L^2}^2
\bigr),
\end{aligned}
\]
where we used
\[
        \abs{A_N+A_L}
        \le
        2\abs{A_L}+\abs{A_N-A_L}.
\]
Finally,
\[
        \abs{\frac1{B_N}-\frac1{B_L}}
        =
        \frac{\abs{B_N-B_L}}{B_NB_L}
        \le
        C\bigl(
        \norm{Q_N-Q_L}_{L^2}+\norm{Q_N-Q_L}_{L^2}^2
        \bigr),
\]
by Lemma~\ref{lem:var-perturb}. Combining these estimates with
\eqref{eq:optvar} proves
\eqref{eq:variance-stability-optimal}.
\end{proof}
Thus, the neural surrogate is not required to approximate the high-fidelity observable directly. Instead, it must approximate the low-fidelity observable that acts as the ideal control. If this approximation is accurate in $L^2(\Omega)$, then the optimal variance reduction is stable under the replacement of $Q_L$ with $Q_N$. The estimate therefore quantifies the effect of the neural approximation on the optimal variance and shows that it is controlled by the observable error induced at the low-fidelity level.

\subsection{Multiple neural control variates}\label{sec:multiple-cv}
The scalar analysis above describes the stability of a single neural control variate. In many kinetic applications, however, variance reduction is obtained from several correlated low-fidelity observables used simultaneously. These observables may correspond to different reduced models, different moment components, or different asymptotic levels of the same multiscale hierarchy. In this case the optimal control-variate coefficient is no longer a scalar, but a vector determined by the covariance matrix of the controls. The purpose of this section is to show that the perturbative stability mechanism of Theorem~\ref{thm:variance-stability} persists in this multi-control setting: if the vector of neural control variates is close to the vector of
exact low-fidelity control variates, then the corresponding optimal
coefficients and optimal variance remain close to those obtained with
the exact low-fidelity controls.
Let
\[
        Q_H\in L^2(\Omega),\qquad
        \mathbf Q_i=(Q_i^{1},\ldots,Q_i^{m})^T\in L^2(\Omega;\R^m),
        \qquad i\in\{L,N\},
\]
where each component $Q_i^{\ell}$ is a scalar control observable, typically of the form
\[
        Q_i^{\ell}(z)=\mathcal Q\!\bigl(f_i^{\ell}(z)\bigr),
        \qquad \ell=1,\dots,m.
\]
Here $\mathbf Q_L$ denotes the vector of exact low-fidelity control variates and $\mathbf Q_N$ the corresponding vector in which one or more components have been replaced by neural surrogates. For a coefficient vector $\boldsymbol{\lambda}\in\R^m$, define
\begin{equation}\label{eq:multiple-cv-estimator}
\mathcal E_i^{\boldsymbol{\lambda}}[Q_H]
:=
\frac1{M_H}\sum_{k=1}^{M_H}\Bigl(Q_H^{(k)}-\boldsymbol{\lambda}^T \mathbf Q_i^{(k)}\Bigr)
+
\boldsymbol{\lambda}^T\frac1{M_L}\sum_{j=1}^{M_L}\widetilde{\mathbf Q}_i^{(j)},
\qquad i\in\{L,N\}.
\end{equation}
The two sample families are independent, and the samples in the first family are coupled through the same realization of the uncertain input. Now, the estimator \eqref{eq:multiple-cv-estimator} is unbiased for every $\boldsymbol{\lambda}$, i.e.
\[
        \E[\mathcal E_i^{\boldsymbol\lambda}[Q_H]]=\E[Q_H].
\]
The structure of  \eqref{eq:multiple-cv-estimator} is the direct analogue of the scalar estimator. The first Monte Carlo average uses coupled samples of the high-fidelity observable and of all control observables, so that their covariances can be exploited. The second Monte Carlo average estimates the expectations of the controls, using a larger and independent sample set. The coefficient vector $\boldsymbol{\lambda}$ determines the optimal linear combination of the controls. When $m=1$, this formulation reduces exactly to the two-level estimator of Section \ref{sec21}.
Let
\[
        \mathbf C_i:=\Cov(\mathbf Q_i,\mathbf Q_i)\in\R^{m\times m},
        \qquad
        \mathbf b_i:=\Cov(Q_H,\mathbf Q_i)\in\R^m.
\]
Then the variance of the control-variate estimator is
\begin{equation}\label{eq:multiple-cv-variance}
\Var(\mathcal E_i^{\boldsymbol\lambda}[Q_H])
=
\frac1{M_H}\Var(Q_H-\boldsymbol{\lambda}^T\mathbf Q_i)
+
\frac1{M_L}\boldsymbol{\lambda}^T \mathbf C_i\boldsymbol{\lambda}.
\end{equation}
If $\mathbf C_i$ is positive definite, the optimal coefficient is
\begin{equation}\label{eq:multiple-cv-lambda}
        \boldsymbol{\lambda}_i^*
        =
        \frac{M_L}{M_H+M_L}\,\mathbf C_i^{-1}\mathbf b_i,
\end{equation}
and the optimal variance is
\begin{equation}\label{eq:multiple-cv-optvar}
        \Var(\mathcal E_i^*[Q_H])
        =
        \frac{\Var(Q_H)}{M_H}
        -
        \frac{M_L}{M_H(M_H+M_L)}\, \mathbf b_i^T \mathbf C_i^{-1}\mathbf b_i .
\end{equation}
As for the scalar case, we introduce the following assumption:

\begin{assumption}[Vector nondegeneracy]
\label{ass:vector-nondegenerate}
There exists $\sigma_0>0$ such that, for every
$\boldsymbol{\lambda}\in\mathbb R^m$,
\[
\boldsymbol{\lambda}^T\mathbf C_L\boldsymbol{\lambda}
\ge \sigma_0|\boldsymbol{\lambda}|^2,
\qquad
\boldsymbol{\lambda}^T\mathbf C_N\boldsymbol{\lambda}
\ge \frac{\sigma_0}{2}|\boldsymbol{\lambda}|^2.
\]
\end{assumption}

\begin{lemma}[Difference of covariance matrices]\label{lem:matrix-cov-perturb}
There exists a constant $C>0$, depending only on the number of multifidelity controls $m$ and $\norm{\mathbf Q_L}_{L^2(\Omega;\R^m)}$, such that
\begin{equation}\label{eq:C-perturb}
        \norm{\mathbf C_N-\mathbf C_L}
        \le
        C(\norm{\mathbf Q_N-\mathbf Q_L}_{L^2(\Omega;\R^m)}+\norm{\mathbf Q_N-\mathbf Q_L}_{L^2(\Omega;\R^m)}^2).
\end{equation}
Moreover,
\begin{equation}\label{eq:b-perturb}
        \norm{\mathbf b_N-\mathbf b_L}
        \le
        \norm{Q_H-\E [Q_H]}_{L^2(\Omega)}\,\norm{\mathbf Q_N-\mathbf Q_L}_{L^2(\Omega;\R^m)} .
\end{equation}
Here $\norm{\cdot}$ denotes the Euclidean norm for vectors and the induced spectral norm for matrices.
\end{lemma}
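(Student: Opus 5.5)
The plan is to mirror the scalar argument of Lemma~\ref{lem:cov-perturb}, working with centered variables and bilinearity of the covariance. Set $\mathbf D:=\mathbf Q_N-\mathbf Q_L$ and write $\mathring X:=X-\E[X]$ for centering. By bilinearity,
\[
\mathbf C_N-\mathbf C_L=\Cov(\mathbf Q_L,\mathbf D)+\Cov(\mathbf D,\mathbf Q_L)+\Cov(\mathbf D,\mathbf D),
\qquad
\mathbf b_N-\mathbf b_L=\Cov(Q_H,\mathbf D)=\E\bigl[\mathring Q_H\,\mathring{\mathbf D}\bigr].
\]
The main tool is a vector Cauchy--Schwarz inequality for cross-covariance matrices. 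For deterministic unit vectors $u,w\in\R^m$,
\[
u^T\Cov(\mathbf X,\mathbf Y)w=\E\bigl[(u^T\mathring{\mathbf X})(w^T\mathring{\mathbf Y})\bigr]\le \norm{u^T\mathring{\mathbf X}}_{L^2}\norm{w^T\mathring{\mathbf Y}}_{L^2}.
\]
Next I use the pointwise bound $|u^T\mathring{\mathbf X}|\le|\mathring{\mathbf X}|$. The centering is a projection in $L^2(\Omega;\R^m)$, so $\norm{\mathring{\mathbf X}}_{L^2}\le\norm{\mathbf X}_{L^2}$. Taking the supremum over $u,w$ then gives $\norm{\Cov(\mathbf X,\mathbf Y)}\le\norm{\mathbf X}_{L^2(\Omega;\R^m)}\norm{\mathbf Y}_{L^2(\Omega;\R^m)}$ in the spectral norm.

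With this bound, the matrix estimate follows directly:
\[
\norm{\mathbf C_N-\mathbf C_L}\le 2\norm{\mathbf Q_L}_{L^2}\norm{\mathbf D}_{L^2}+\norm{\mathbf D}_{L^2}^2.
\]
This is \eqref{eq:C-perturb} with $C=\max(2\norm{\mathbf Q_L}_{L^2},1)$. The constant is in fact independent of $m$, but the $m$-dependence allowed in the statement is harmless. If one prefers to argue entrywise through the Frobenius norm, a factor like $m$ or $\sqrt m$ appears, which is presumably where the stated dependence on $m$ comes from.

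For \eqref{eq:b-perturb}, I bound the vector by duality. For a unit vector $w$,
\[
w^T(\mathbf b_N-\mathbf b_L)=\E\bigl[\mathring Q_H\,(w^T\mathring{\mathbf D})\bigr]\le\norm{Q_H-\E[Q_H]}_{L^2}\norm{\mathring{\mathbf D}}_{L^2}\le\norm{Q_H-\E[Q_H]}_{L^2}\norm{\mathbf D}_{L^2}.
\]
Taking the supremum over $w$ gives the claim exactly, with constant one.

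The only step needing care is the spectral-norm Cauchy--Schwarz for cross-covariances. In particular, it must be checked that $|u^T\mathring{\mathbf X}|\le|\mathring{\mathbf X}|$ holds pointwise. It must also be checked that $\norm{\mathring{\mathbf X}}_{L^2}\le\norm{\mathbf X}_{L^2}$, which follows from $\E|\mathring{\mathbf X}|^2=\E|\mathbf X|^2-|\E\mathbf X|^2$. Everything else is bilinear bookkeeping.
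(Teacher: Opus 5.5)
Your proof is correct and is essentially the paper's argument: the bound on $\mathbf b_N-\mathbf b_L$ is identical, and for $\mathbf C_N-\mathbf C_L$ the only difference is in how the matrix bound is organized. The paper tests the symmetric difference with a single unit vector $\mathbf a$, applies the scalar Lemma~\ref{lem:var-perturb} to $\mathbf a^T\mathbf Q_N$ and $\mathbf a^T\mathbf Q_L$, and concludes via the Rayleigh-quotient characterization of the spectral norm of a symmetric matrix. You instead first split by bilinearity into non-symmetric cross-covariances, and therefore need the two-vector characterization $\norm{A}=\sup_{|u|=|w|=1}u^TAw$. Both routes yield the same bound $2\norm{\mathbf Q_L}_{L^2}\norm{\mathbf Q_N-\mathbf Q_L}_{L^2}+\norm{\mathbf Q_N-\mathbf Q_L}_{L^2}^2$, so your observation that the constant does not actually depend on $m$ also holds for the paper's proof.
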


\begin{proof}
For any $\mathbf a\in\R^m$ with $|\mathbf a|=1$,
\[\norm{\mathbf a^T(\mathbf Q_N-\mathbf Q_L)}_{L^2}\le \norm{\mathbf Q_N-\mathbf Q_L}_{L^2(\Omega;\R^m)},\qquad
\norm{\mathbf a^T\mathbf Q_L}_{L^2}\le \norm{\mathbf Q_L}_{L^2(\Omega;\R^m)},\] we obtain
\[
        \abs{\mathbf a^T(\mathbf C_N-\mathbf C_L)\mathbf a}
        \le C(\norm{\mathbf Q_N-\mathbf Q_L}_{L^2(\Omega;\R^m)}+\norm{\mathbf Q_N-\mathbf Q_L}_{L^2(\Omega;\R^m)}^2).
\]
The spectral-norm estimate \eqref{eq:C-perturb} follows by the variational characterization of symmetric matrices. For \eqref{eq:b-perturb}, note that
\[
        \mathbf b_N-\mathbf b_L=\Cov(Q_H,\mathbf Q_N-\mathbf Q_L).
\]
Thus, for every unit vector $\mathbf a$,
\begin{align*}
        &\abs{\mathbf a^T(\mathbf b_N-\mathbf b_L)}
        =\abs{\Cov(Q_H,\mathbf a^T(\mathbf Q_N-\mathbf Q_L))}
        \le
        \norm{Q_H-\E [Q_H]}_{L^2}\,\norm{\mathbf a^T(\mathbf Q_N-\mathbf Q_L)}_{L^2}\\
        &\le
        \norm{Q_H-\E [Q_H]}_{L^2}\,\norm{\mathbf Q_N-\mathbf Q_L}_{L^2(\Omega;\R^m)}.
        \end{align*}
Taking the supremum over $\mathbf a$ proves the result.
\end{proof}

\begin{theorem}[Multiple neural control variates]\label{thm:multiple-cv-stability}
Under Assumption~\ref{ass:vector-nondegenerate}, and for $\norm{\mathbf Q_N-\mathbf Q_L}_{L^2(\Omega;\R^m)} \le 1$, there exists a constant $C>0$ such that
\begin{equation}\label{eq:lambda-vector-perturb}
        \norm{\boldsymbol{\lambda}_N^*-\boldsymbol{\lambda}_L^*}
        \le
        C(\norm{\mathbf Q_N-\mathbf Q_L}_{L^2(\Omega;\R^m)}+\norm{\mathbf Q_N-\mathbf Q_L}_{L^2(\Omega;\R^m)}^2),
\end{equation}
and
\begin{equation}\label{eq:multiple-cv-var-perturb}
	\begin{aligned}
        &\abs{\Var(\mathcal E_N^*[Q_H])-\Var(\mathcal E_L^*[Q_H])}
        \le\\
        &C(\norm{\mathbf Q_N-\mathbf Q_L}_{L^2(\Omega;\R^m)}+\norm{\mathbf Q_N-\mathbf Q_L}_{L^2(\Omega;\R^m)}^2).
        \end{aligned}
\end{equation}
\end{theorem}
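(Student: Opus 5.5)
The argument mirrors the proof of Theorem~\ref{thm:variance-stability}, with scalars replaced by the matrices $\mathbf C_i$ and vectors $\mathbf b_i$. Write $\delta:=\norm{\mathbf Q_N-\mathbf Q_L}_{L^2(\Omega;\R^m)}\le 1$. Lemma~\ref{lem:matrix-cov-perturb} then gives $\norm{\mathbf C_N-\mathbf C_L}\le C(\delta+\delta^2)$ and $\norm{\mathbf b_N-\mathbf b_L}\le \norm{Q_H-\E[Q_H]}_{L^2}\,\delta$.

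Assumption~\ref{ass:vector-nondegenerate} gives $\norm{\mathbf C_L^{-1}}\le \sigma_0^{-1}$ and $\norm{\mathbf C_N^{-1}}\le 2\sigma_0^{-1}$. This is the step that replaces the scalar bound $B_i$ bounded away from zero, and it is the main point needing care. A Neumann-series argument would need smallness of $\mathbf C_N-\mathbf C_L$, so I avoid it and use the uniform lower bound on $\mathbf C_N$ directly. For a symmetric positive definite matrix, the smallest eigenvalue equals the infimum of the Rayleigh quotient, so the bounds follow immediately. I will then use the resolvent identity
\[
\mathbf C_N^{-1}-\mathbf C_L^{-1}=\mathbf C_N^{-1}(\mathbf C_L-\mathbf C_N)\mathbf C_L^{-1},
\]
which yields $\norm{\mathbf C_N^{-1}-\mathbf C_L^{-1}}\le 2\sigma_0^{-2}C(\delta+\delta^2)$.

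For the coefficient estimate \eqref{eq:lambda-vector-perturb}, I use \eqref{eq:multiple-cv-lambda} and split
\[
\mathbf C_N^{-1}\mathbf b_N-\mathbf C_L^{-1}\mathbf b_L=\mathbf C_N^{-1}(\mathbf b_N-\mathbf b_L)+(\mathbf C_N^{-1}-\mathbf C_L^{-1})\mathbf b_L .
\]
The first term is bounded by $2\sigma_0^{-1}\norm{Q_H-\E[Q_H]}_{L^2}\,\delta$. The second is bounded by $2\sigma_0^{-2}C(\delta+\delta^2)\norm{\mathbf b_L}$. By Cauchy--Schwarz, $\norm{\mathbf b_L}\le \norm{Q_H}_{L^2}\norm{\mathbf Q_L}_{L^2}$. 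Multiplying by $M_L/(M_H+M_L)\le1$ gives \eqref{eq:lambda-vector-perturb}.

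For the variance estimate \eqref{eq:multiple-cv-var-perturb}, formula \eqref{eq:multiple-cv-optvar} shows that it suffices to bound $|\mathbf b_N^T\mathbf C_N^{-1}\mathbf b_N-\mathbf b_L^T\mathbf C_L^{-1}\mathbf b_L|$. I write this difference as
\[
(\mathbf b_N-\mathbf b_L)^T\mathbf C_N^{-1}\mathbf b_N+\mathbf b_L^T(\mathbf C_N^{-1}\mathbf b_N-\mathbf C_L^{-1}\mathbf b_L).
\]
The second term is controlled by the bound just proved for $\mathbf C_N^{-1}\mathbf b_N-\mathbf C_L^{-1}\mathbf b_L$. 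For the first term I use $\norm{\mathbf b_N}\le\norm{\mathbf b_L}+\norm{Q_H}_{L^2}\delta$ and $\norm{\mathbf C_N^{-1}}\le 2\sigma_0^{-1}$. Products then produce powers $\delta^3$ and $\delta^4$, and the hypothesis $\delta\le1$ reduces these to $\delta+\delta^2$. The constant $C$ depends only on $m,M_H,M_L,\sigma_0,\norm{Q_H}_{L^2}$ and $\norm{\mathbf Q_L}_{L^2}$.
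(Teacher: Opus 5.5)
Your proposal is correct and follows the paper's proof essentially step by step. It uses the same inverse bounds from Assumption~\ref{ass:vector-nondegenerate}, the same resolvent identity, and the same splitting of $\mathbf C_N^{-1}\mathbf b_N-\mathbf C_L^{-1}\mathbf b_L$ for the coefficients. The only deviation is that you write the quadratic-form difference as two terms, reusing the coefficient bound, instead of the paper's three-term split. With your grouping no power beyond $\delta^2$ actually appears; the cubic term comes only from the paper's middle term $\mathbf b_L^T(\mathbf C_N^{-1}-\mathbf C_L^{-1})\mathbf b_N$. So your remark about $\delta^3,\delta^4$ is unnecessary, and your route does not need $\delta\le1$ at all.
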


\begin{proof}
From \eqref{eq:multiple-cv-lambda},
\[
\boldsymbol{\lambda}_N^*-\boldsymbol{\lambda}_L^*
=
\frac{M_L}{M_H+M_L}
\bigl(\mathbf C_N^{-1}\mathbf b_N-\mathbf C_L^{-1}\mathbf b_L\bigr).
\]
We write
\[
\mathbf C_N^{-1}\mathbf b_N-\mathbf C_L^{-1}\mathbf b_L
=
\mathbf C_N^{-1}(\mathbf b_N-\mathbf b_L)+(\mathbf C_N^{-1}-\mathbf C_L^{-1})\mathbf b_L.
\]
By Assumption~\ref{ass:vector-nondegenerate},
$\norm{\mathbf C_N^{-1}}\le 2/\sigma_0$ and
$\norm{\mathbf C_L^{-1}}\le 1/\sigma_0$. Moreover,
\[
\mathbf C_N^{-1}-\mathbf C_L^{-1}
=
\mathbf C_N^{-1}(\mathbf C_L-\mathbf C_N)\mathbf C_L^{-1}.
\]
Using Lemma~\ref{lem:matrix-cov-perturb} gives \eqref{eq:lambda-vector-perturb}. Using the optimal representation \eqref{eq:multiple-cv-optvar}, we get for the variance
\[
\Var(\mathcal E_N^*[Q_H])-\Var(\mathcal E_L^*[Q_H])
=
-\frac{M_L}{M_H(M_H+M_L)}
\left(\mathbf b_N^T\mathbf C_N^{-1}\mathbf b_N-\mathbf b_L^T\mathbf C_L^{-1}\mathbf b_L\right).
\]
The difference in parentheses is bounded by
\[
\abs{(\mathbf b_N-\mathbf b_L)^T\mathbf C_N^{-1}\mathbf b_N}
+
\abs{\mathbf b_L^T(\mathbf C_N^{-1}-\mathbf C_L^{-1})\mathbf b_N}
+
\abs{\mathbf b_L^T\mathbf C_L^{-1}(\mathbf b_N-\mathbf b_L)}.
\]
The inverse bounds and Lemma~\ref{lem:matrix-cov-perturb} yield the result. Indeed, by \eqref{eq:b-perturb},
\[
\|\mathbf b_N-\mathbf b_L\|
\le
\|Q_H-\E[Q_H]\|_{L^2(\Omega)}
\|\mathbf Q_N-\mathbf Q_L\|_{L^2(\Omega;\R^m)}.
\]
Since $\|\mathbf Q_N-\mathbf Q_L\|_{L^2(\Omega;\R^m)}\le1$, both $\mathbf b_L$ and $\mathbf b_N$ remain bounded by a constant depending only on $\|Q_H\|_{L^2}$, $\|\mathbf Q_L\|_{L^2(\Omega;\R^m)}$, and $m$.
\end{proof}

\section{Kinetic surrogate estimates}\label{sec:applications}
As already stated, in our analysis and in the related applications, the neural network approximates an intermediate reduced kinetic model, such as a
Fokker--Planck or BGK surrogate, rather than the high-fidelity Boltzmann or Landau
dynamics \eqref{eq:kinetic} directly. This choice is not only motivated by the lower computational cost of the
reduced models. It also makes the neural approximation more feasible, since the
training residual does not involve the full Boltzmann or Landau collision operator. Indeed, for the models considered below, the  dynamics is either differential in the
velocity variable or local through a Maxwellian relaxation term, and can therefore be incorporated in a neural residual by automatic differentiation techniques. The neural surrogate is thus trained at the reduced
kinetic level, possibly calibrated or enriched with high-fidelity data~\cite{PareschiChenDimarco,ChenDimarcoPareschiSAPNN}.

\subsection{Problem setting and observables}\label{sec:observables}

In this section, we restrict ourselves to a perturbative regime near the normalized global Maxwellian
\[
M(v)=(2\pi)^{-d_v/2}e^{-|v|^2/2},
\qquad v\in\mathbb R^{d_v},
\]
in the same spirit as the hypocoercive framework with random input developed in
\cite{ZhuJin2017VPFP,JinZhu2018VPFP}.
Let
\[
H:=L^2\bigl(\mathbb T^{d_x}\times\mathbb R^{d_v};M^{-1}(v)\,dx\,dv\bigr),
\qquad
\|g\|_H^2
:=
\int_{\mathbb T^{d_x}\times\mathbb R^{d_v}}
\frac{|g(x,v)|^2}{M(v)}\,dx\,dv,
\]
and define the associated velocity-dissipation space
\[
V:=\Bigl\{g\in H:\ \nabla_v\!\left(\frac{g}{M}\right)\in L^2(M\,dx\,dv)\Bigr\}.
\]
The associated velocity-dissipation seminorm is
\[
\|g\|_V^2
:=
\int_{\mathbb T^{d_x}\times\mathbb R^{d_v}}
M(v)\left|\nabla_v\!\left(\frac{g}{M}\right)\right|^2\,dx\,dv.
\]
For an integer $m\ge0$, we set
\[
\mathcal H^m
:=
\Bigl\{g:\ \partial_x^\alpha g\in H,\ \ |\alpha|\le m\Bigr\},
\qquad
\|g\|_{\mathcal H^m}^2
:=
\sum_{|\alpha|\le m}\|\partial_x^\alpha g\|_H^2,
\]
and
\[
\mathcal V^m
:=
\Bigl\{g:\ \partial_x^\alpha g\in V,\ \ |\alpha|\le m\Bigr\},
\qquad
\|g\|_{\mathcal V^m}^2
:=
\sum_{|\alpha|\le m}\|\partial_x^\alpha g\|_V^2.
\]
Throughout this section we assume
\[
m>\frac{d_x}{2}+1,
\]
so that $H^m(\mathbb T^{d_x})$ is an algebra and embeds continuously into
$W^{1,\infty}(\mathbb T^{d_x})$.

For a kinetic function $f$ with positive density and
temperature, we denote
\[
\rho_f:=\int_{\mathbb R^{d_v}}f\,dv,
\qquad
\rho_f u_f:=\int_{\mathbb R^{d_v}}vf\,dv,
\qquad
d_v\rho_f T_f:=\int_{\mathbb R^{d_v}}|v-u_f|^2f\,dv.
\]
The weighted space $H$ controls the polynomial velocity moments needed in the
definitions above. In the perturbative regime near $M$, and provided
$\rho_f$ stays uniformly away from zero and $T_f$ stays uniformly positive,
the maps
\[
f\mapsto \rho_f,\qquad f\mapsto u_f,\qquad f\mapsto T_f
\]
are locally Lipschitz from $\mathcal H^m$ into $H^m(\mathbb T^{d_x})$.

We now connect the control error used in Section~\ref{sec:setting} with
the error between the exact kinetic density of the reduced model and its neural approximation.
A natural class of quantities of interest in kinetic UQ consists of final-time moments. 
Given $\psi\in L^2(M\,dx\,dv)$, for
$f\in C([0,T];H)$ define
\begin{equation}\label{eq:qoi-final-time-moment}
\mathcal Q(f)
:=
\int_{\mathbb T^{d_x}\times\mathbb R^{d_v}}
\psi(x,v)f(T,x,v)\,dx\,dv.
\end{equation}
Then
\begin{equation}\label{eq:qoi-final-time-lipschitz}
\begin{split}
|\mathcal Q(f)-\mathcal Q(g)|
&\le
\|\psi\|_{L^2(M\,dx\,dv)}\,\|f-g\|_{L^\infty(0,T;H)}\\
&\le
\|\psi\|_{L^2(M\,dx\,dv)}\,\|f-g\|_{L^\infty(0,T;\mathcal H^m)}.
\end{split}
\end{equation}
Consequently, if
\[
Q_L(z)=\mathcal Q(f_L(\cdot;z)),
\qquad
Q_N(z)=\mathcal Q(f_N(\cdot;z)),
\]
then
\begin{equation}\label{eq:qoi-observable-error}
\|Q_N-Q_L\|_{L^2(\Omega)}
\le
\|\psi\|_{L^2(M\,dx\,dv)}
\|f_N-f_L\|_{L^2(\Omega;L^\infty(0,T;\mathcal H^m))}.
\end{equation}
Combining \eqref{eq:qoi-observable-error} with
Theorem~\ref{thm:variance-stability},
under Assumption~\ref{ass:nondegenerate-variance}, gives
\begin{equation}\label{eq:kinetic-to-variance}
\begin{aligned}
&\bigl|
\Var(\mathcal E_N^*[Q_H])
-
\Var(\mathcal E_L^*[Q_H])
\bigr|
\le
C\Bigl(
\|\psi\|_{L^2(M\,dx\,dv)}
\|f_N-f_L\|_{L^2(\Omega;L^\infty(0,T;\mathcal H^m))}
\\
&\hspace{2.5cm}
+
\|\psi\|_{L^2(M\,dx\,dv)}^2
\|f_N-f_L\|_{L^2(\Omega;L^\infty(0,T;\mathcal H^m))}^2
\Bigr).
\end{aligned}
\end{equation}
Thus the remaining task is to estimate $f_N-f_L$ in
$L^2(\Omega;L^\infty(0,T;\mathcal H^m))$ for the reduced kinetic model
approximated by the neural surrogate. This is the model-dependent part of the
analysis, carried out below for Fokker--Planck and BGK surrogates.

\begin{assumption}[Perturbative admissibility]\label{ass:perturbative}
In the sequel we assume that the exact density $f_L$ and the neural surrogate
$f_N$ belong to an admissible perturbative class near $M$. 
More precisely,
\[
f_L-M,\ f_N-M \in C([0,T];\mathcal H^m),
\]
and
\begin{equation}\label{eq:vfp-smallness}
\|f_L-M\|_{L^\infty(0,T;\mathcal H^m)}
+
\|f_N-M\|_{L^\infty(0,T;\mathcal H^m)}
\le \delta
\end{equation}
for some $\delta>0$ sufficiently small.

We also assume that the corresponding macroscopic fields are uniformly
admissible: there exists $c_0>0$ such that
\[
\rho_{f_L},\rho_{f_N}\ge c_0,
\qquad
T_{f_L},T_{f_N}\ge c_0,
\]
and that the moment maps are locally Lipschitz in the following sense:
\begin{equation}\label{eq:moment-lipschitz}
\|\rho_f-\rho_g\|_{H^m_x}
+
\|u_f-u_g\|_{H^m_x}
+
\|T_f-T_g\|_{H^m_x}
\le
C_{\rm mom}\|f-g\|_{\mathcal H^m}
\end{equation}
for all admissible $f,g$ in the perturbative class, where
\[
\|U\|_{H_x^m}^2
=
\sum_{|\alpha|\le m}
\|\partial_x^\alpha U\|_{L_x^2}^2
=
\sum_{|\alpha|\le m}
\int_{\mathbb T^{d_x}}
|\partial_x^\alpha U(x)|^2\,dx.
\]
In particular,
\begin{equation}\label{eq:moment-smallness}
\|u_f\|_{H^m_x}+\|T_f-1\|_{H^m_x}
\le C_{\rm mom}\|f-M\|_{\mathcal H^m}.
\end{equation}
Finally, when strong residuals are used, we assume that the surrogate is
sufficiently regular for the residuals below to belong to the stated spaces.
\end{assumption}

\subsection{Fokker--Planck surrogate}\label{sec:vfp}
Fokker--Planck reduced models arise naturally in collisional plasma applications governed by Vlasov--Poisson--Landau equations. They provide the intermediate kinetic level used in the neural surrogate constructions of~\cite{ChenDimarcoPareschiSAPNN}.
Here we ignore the presence of external fields and consider the nonlinear FP surrogate
\begin{equation}\label{eq:fp-low-fidelity}
\partial_t f_L + v\cdot\nabla_x f_L
=
\nu(z)\,\mathcal C_{\mathrm{FP}}(f_L),
\qquad
f_L(0,x,v;z)=f_0(x,v;z),
\end{equation}
where
\begin{equation}\label{eq:vfp-nonlinear-operator}
\mathcal C_{\mathrm{FP}}(f)
:=
\nabla_v\cdot\Bigl((v-u_f)f + T_f\nabla_v f\Bigr),
\qquad
0<\nu_*\le \nu(z)\le \nu^*.
\end{equation}
This is the nonlinear Fokker--Planck operator associated with local Maxwellian equilibria.
For nonlinear Fokker--Planck and Vlasov--Poisson--Fokker--Planck models with uncertainty, hypocoercive, asymptotic-preserving,
and micro--macro formulations in perturbative regimes are discussed, for instance,
in~\cite{ZhuJin2017VPFP,JinZhu2018VPFP,NonlinearVFP2025}. See also~\cite{DesvillettesVillani2001,DolbeaultMouhotSchmeiser2015}
for classical hypocoercivity estimates for Fokker--Planck and linear kinetic equations.

\begin{proposition}[Surrogate estimate for nonlinear FP]\label{prop:fp-surrogate}
Let $f_L$ solve~\eqref{eq:fp-low-fidelity}, and let $f_N$ be a surrogate solution with strong residual
\begin{equation}\label{eq:fp-residual}
\mathcal R_{\mathrm{FP}}(f_N;z)
:=
\partial_t f_N + v\cdot\nabla_x f_N - \nu(z)\,\mathcal C_{\mathrm{FP}}(f_N).
\end{equation}
Assume that Assumption~\ref{ass:perturbative} holds. In addition, assume
that
\[
f_L-M,\ f_N-M\in L^2(0,T;\mathcal V^m),
\qquad
\|f_N-M\|_{L^2(0,T;\mathcal V^m)}\le K_V
\]
uniformly with respect to the random parameter $z$.
Then, for $\delta>0$ sufficiently small,
there exists a constant $C_{\mathrm{FP}}
=
C\bigl(T,\nu_*,\nu^*,m,C_{\rm mom},c_0, K_V\bigr)>0$
such that
\begin{equation}\label{eq:fp-pointwise-est}
\begin{split}
\|f_N-f_L\|_{L^\infty(0,T;\mathcal H^m)}
&+
\sqrt{\nu_*}\,\|f_N-f_L\|_{L^2(0,T;\mathcal V^m)}
\\
&\le
C_{\mathrm{FP}}
\Bigl(
\|f_N(0)-f_0(z)\|_{\mathcal H^m}
+
\|\mathcal R_{\mathrm{FP}}(f_N;z)\|_{L^2(0,T;\mathcal H^m)}
\Bigr).
\end{split}
\end{equation}
Consequently,
\begin{equation}\label{eq:fp-l2omega-est}
\|f_N-f_L\|_{L^2(\Omega;L^\infty(0,T;\mathcal H^m))}
\le
C_{\mathrm{FP}}\bigl(\eta_{0,N}^{\mathrm{FP}}+\delta_N^{\mathrm{FP}}\bigr),
\end{equation}
where
\[
\eta_{0,N}^{\mathrm{FP}}
:=
\|f_N(0)-f_0(z)\|_{L^2(\Omega;\mathcal H^m)},
\qquad
\delta_N^{\mathrm{FP}}
:=
\|\mathcal R_{\mathrm{FP}}(f_N;z)\|_{L^2(\Omega;L^2(0,T;\mathcal H^m))}.
\]
\end{proposition}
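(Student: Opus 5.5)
The plan is a weighted energy estimate for the error $e:=f_N-f_L$, closed with Gr\"onwall's lemma. Subtracting \eqref{eq:fp-low-fidelity} from \eqref{eq:fp-residual} gives
\[
\partial_t e+v\cdot\nabla_x e
=
\nu(z)\bigl(\mathcal C_{\mathrm{FP}}(f_N)-\mathcal C_{\mathrm{FP}}(f_L)\bigr)+\mathcal R_{\mathrm{FP}}(f_N;z),
\qquad e(0)=f_N(0)-f_0(z).
\]
We split the collision difference into a linearized part and a remainder. Writing $L g:=\nabla_v\cdot(vg+\nabla_v g)=\nabla_v\cdot\bigl(M\nabla_v(g/M)\bigr)$ for the linear Fokker--Planck operator at the global Maxwellian, we have
\[
\mathcal C_{\mathrm{FP}}(f_N)-\mathcal C_{\mathrm{FP}}(f_L)
=
L e
+\nabla_v\cdot\bigl((1-T_{f_N})\nabla_v e-u_{f_N}e\bigr)
+\nabla_v\cdot\bigl(-(u_{f_N}-u_{f_L})f_L+(T_{f_N}-T_{f_L})\nabla_v f_L\bigr).
\]
We apply $\partial_x^\alpha$ with $|\alpha|\le m$ and pair with $\partial_x^\alpha e/M$ in $L^2(dx\,dv)$. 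The transport term vanishes on the torus, since it is skew-adjoint and $M$ does not depend on $x$. The linear part gives exactly $-\nu\|\partial_x^\alpha e\|_V^2\le-\nu_*\|\partial_x^\alpha e\|_V^2$. This is the dissipation that appears on the left of \eqref{eq:fp-pointwise-est}.

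The nonlinear remainders are handled by integrating by parts in $v$, which moves the $v$-derivative onto $\nabla_v(\partial_x^\alpha e/M)$. Each term then has the form $\int M\,\nabla_v(\partial^\alpha e/M)\cdot G$ and is bounded by $\|\partial^\alpha e\|_V\,\|G\|$ with suitable weights. Two cases arise.
\begin{itemize}
\item For the terms with coefficients $1-T_{f_N}$ and $u_{f_N}$, we use Moser/Leibniz estimates (the algebra property of $H^m$ and the embedding into $W^{1,\infty}$ for $m>d_x/2+1$) together with \eqref{eq:moment-smallness}. These give a bound $C\delta\,\|e\|_{\mathcal V^m}\bigl(\|e\|_{\mathcal V^m}+\|e\|_{\mathcal H^m}\bigr)$. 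Writing $\nabla_v e=M\nabla_v(e/M)-ve$, the piece $ve$ also needs velocity-moment control; it is absorbed through the $V$-seminorm via a weighted Poincar\'e-type bound for the Gaussian measure. For $\delta$ small, this contribution is absorbed into $\tfrac{\nu_*}{2}\|e\|_{\mathcal V^m}^2$.
\item For the moment-difference terms, we use \eqref{eq:moment-lipschitz}: $\|u_{f_N}-u_{f_L}\|_{H^m_x}+\|T_{f_N}-T_{f_L}\|_{H^m_x}\le C_{\rm mom}\|e\|_{\mathcal H^m}$. Writing $f_L=M+(f_L-M)$, the part involving $M$ is explicit. For $\nabla_vM+vM=0$ the temperature part reduces to terms in $vM$, which are harmless. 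The perturbative part brings in $\|f_L-M\|_{\mathcal V^m}$, which is controlled in $L^2_t$ through $K_V$ and the bound $\|f_L-M\|_{L^2_t\mathcal V^m}\le\|f_N-M\|_{L^2_t\mathcal V^m}+\|e\|_{L^2_t\mathcal V^m}$. Young's inequality then gives $\tfrac{\nu_*}{4}\|e\|_{\mathcal V^m}^2+C(1+\|f_N-M\|_{\mathcal V^m}^2)\|e\|_{\mathcal H^m}^2$.
\end{itemize}
The residual term is bounded by $\|\mathcal R\|_{\mathcal H^m}\|e\|_{\mathcal H^m}\le\tfrac12\|\mathcal R\|_{\mathcal H^m}^2+\tfrac12\|e\|_{\mathcal H^m}^2$.

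Collecting all terms yields
\[
\frac{d}{dt}\|e\|_{\mathcal H^m}^2+\nu_*\|e\|_{\mathcal V^m}^2
\le
a(t)\|e\|_{\mathcal H^m}^2+\|\mathcal R_{\mathrm{FP}}\|_{\mathcal H^m}^2,
\qquad
a(t)=C\bigl(1+\|f_N(t)-M\|_{\mathcal V^m}^2\bigr),
\]
where $\int_0^T a\le C(T+K_V^2)$. Gr\"onwall's lemma then gives \eqref{eq:fp-pointwise-est} with $C_{\mathrm{FP}}=C\exp\bigl(C(T+K_V^2)\bigr)$, uniformly in $z$. Squaring \eqref{eq:fp-pointwise-est}, integrating over $\Omega$, and using $(a+b)^2\le2a^2+2b^2$ gives \eqref{eq:fp-l2omega-est}. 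This last step uses the uniformity of $C_{\mathrm{FP}}$ in $z$, which follows from $\nu_*\le\nu\le\nu^*$ and the fact that all other assumptions are uniform in $z$.

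The main obstacle is the nonlinear commutator terms. The issues are the top-order $x$-derivatives falling on $u_f$ and $T_f$, and the unbounded velocity weight $v$ in $u_fe$ and $ve$, all measured in the $M^{-1}$-weighted norm. My first approach is to write every such term through the combination $M\nabla_v(\cdot/M)$ and rely on the Gaussian Poincar\'e inequality. If that fails to control $\|ve\|$, I would fall back on requiring $\delta$ small enough, so that these terms are absorbed perturbatively as in the hypocoercive frameworks of \cite{ZhuJin2017VPFP,JinZhu2018VPFP}.
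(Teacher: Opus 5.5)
Your overall strategy is the one the paper uses. The ingredients match: the $M^{-1}$-weighted energy estimate on $\partial_x^\alpha e$, the exact dissipation $-\|e\|_{\mathcal V^m}^2$ from the linear Fokker--Planck part, the Gaussian bound $\|vg\|_H+\|\nabla_v g\|_H\le C(\|g\|_H+\|g\|_V)$, splitting the moment-difference terms against $M$, Young's inequality, and Gr\"onwall with $\int_0^T a\le C(T+K_V^2)$.

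The one real difference is your decomposition of the collision difference, and there the argument does not close as written. You take the $f_N$-moments as coefficients of $e$ and put $f_L$ in the moment-difference terms. The temperature term $\nabla_v\cdot\bigl((T_{f_N}-T_{f_L})\nabla_v(f_L-M)\bigr)$ then produces $\|f_L-M\|_{\mathcal V^m}$. The hypotheses give no $z$-uniform bound on this quantity; they only assume $f_L-M\in L^2(0,T;\mathcal V^m)$. Your remedy is $\|f_L-M\|_{L^2_t\mathcal V^m}\le K_V+\|e\|_{L^2_t\mathcal V^m}$, which is circular at the level of Gr\"onwall. The exponent would contain $\|e\|_{L^2_t\mathcal V^m}^2$, the quantity being estimated, so $C_{\mathrm{FP}}$ would no longer depend only on $(T,\nu_*,\nu^*,m,C_{\rm mom},c_0,K_V)$. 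This $L^2_t$ bound also does not justify the pointwise-in-time inequality with $a(t)=C(1+\|f_N(t)-M\|_{\mathcal V^m}^2)$ that you then write down.

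There are two easy repairs.

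\textbf{Paper's splitting.} The paper simply uses the other splitting,
\[
u_{f_L}f_L-u_{f_N}f_N=-u_{f_L}e-(u_{f_N}-u_{f_L})f_N,\qquad
T_{f_N}\nabla_v f_N-T_{f_L}\nabla_v f_L=T_{f_L}\nabla_v e+(T_{f_N}-T_{f_L})\nabla_v f_N .
\]
Here the coefficients multiplying $e$ are the small $f_L$-moments, and only $\|f_N-M\|_{\mathcal V^m}$ appears, which is exactly what $K_V$ controls.

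\textbf{Your splitting, applied pointwise in $t$.} Alternatively, keep your splitting and use the triangle inequality pointwise in time. After Young, the term is $C_\kappa(\delta+\|f_L-M\|_{\mathcal V^m})^2\|e\|_{\mathcal H^m}^2$. Bound it by
\[
2C_\kappa(\delta+\|f_N-M\|_{\mathcal V^m})^2\|e\|_{\mathcal H^m}^2+2C_\kappa\|e\|_{\mathcal V^m}^2\|e\|_{\mathcal H^m}^2 .
\]
By \eqref{eq:vfp-smallness}, $\|e(t)\|_{\mathcal H^m}\le\|f_N-M\|_{\mathcal H^m}+\|f_L-M\|_{\mathcal H^m}\le\delta$. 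The last term is therefore at most $2C_\kappa\delta^2\|e\|_{\mathcal V^m}^2$. It is absorbed into the dissipation by choosing $\kappa$ first and then $\delta$ small.

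Either way you recover your final differential inequality, and the rest of your argument goes through, including the $L^2(\Omega)$ step. One minor correction: the coefficient in your linearized remainder should be $T_{f_N}-1$, not $1-T_{f_N}$. This is harmless, since only absolute bounds are used.
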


\begin{proof}
Set
\[
e:=f_N-f_L,
\qquad
R_N:=\mathcal R_{\mathrm{FP}}(f_N;z).
\]
Subtracting the equations for $f_N$ and $f_L$ gives
\begin{equation}\label{eq:vfp-error-equation}
\partial_t e + v\cdot\nabla_x e
=
\nu(z)\bigl(\mathcal C_{\mathrm{FP}}(f_N)-\mathcal C_{\mathrm{FP}}(f_L)\bigr)
+
R_N,
\qquad
e|_{t=0}=f_N(0)-f_0(z).
\end{equation}

We decompose the nonlinear collision difference as
\begin{equation}\label{eq:vfp-collision-difference}
\begin{aligned}
\mathcal C_{\mathrm{FP}}(f_N)-\mathcal C_{\mathrm{FP}}(f_L)
&=
\nabla_v\cdot\bigl(v e+\nabla_v e\bigr)
-\nabla_v\cdot(u_{f_L}e)
-\nabla_v\cdot\bigl((u_{f_N}-u_{f_L})f_N\bigr)
\\
&\quad
+\nabla_v\cdot\bigl((T_{f_L}-1)\nabla_v e\bigr)
+\nabla_v\cdot\bigl((T_{f_N}-T_{f_L})\nabla_v f_N\bigr).
\end{aligned}
\end{equation}
The first term is the linear Fokker--Planck operator around the global Maxwellian $M$.

Let $\alpha$ be a multi-index with $|\alpha|\le m$. Apply $\partial_x^\alpha$ to
\eqref{eq:vfp-error-equation}, take the $H$-inner product with $\partial_x^\alpha e$,
and sum over $|\alpha|\le m$. By periodicity in $x$, the transport term vanishes. For the
linear Fokker--Planck part we have the exact identity
\[
\sum_{|\alpha|\le m}
\bigl\langle
\partial_x^\alpha\nabla_v\cdot(v e+\nabla_v e),
\partial_x^\alpha e
\bigr\rangle_H
=
-\|e\|_{\mathcal V^m}^2.
\]
We shall repeatedly use the standard Gaussian weighted estimate
\[
\|vg\|_H+\|\nabla_v g\|_H
\le
C\bigl(\|g\|_H+\|g\|_V\bigr),
\]
together with the corresponding estimate for the spatial derivatives
$\partial_x^\alpha g$, $|\alpha|\le m$.

We now estimate the nonlinear perturbative terms. Since $H^m_x$ is an algebra and the
moment maps are locally Lipschitz by~\eqref{eq:moment-lipschitz}, we have
\[
\|u_{f_L}\|_{H^m_x}
+
\|T_{f_L}-1\|_{H^m_x}
\le C\delta,
\]
and
\[
\|u_{f_N}-u_{f_L}\|_{H^m_x}
+
\|T_{f_N}-T_{f_L}\|_{H^m_x}
\le C\|e\|_{\mathcal H^m}.
\]
Using product estimates in $H^m_x$, integration by parts in $v$, and the definition of
$\mathcal V^m$, we obtain
\[
\left|
\sum_{|\alpha|\le m}
\bigl\langle
\partial_x^\alpha\nabla_v\cdot(u_{f_L}e),
\partial_x^\alpha e
\bigr\rangle_H
\right|
\le
C\delta
\bigl(
\|e\|_{\mathcal H^m}^2+\|e\|_{\mathcal V^m}^2
\bigr),
\]
and similarly
\[
\left|
\sum_{|\alpha|\le m}
\bigl\langle
\partial_x^\alpha\nabla_v\cdot((T_{f_L}-1)\nabla_v e),
\partial_x^\alpha e
\bigr\rangle_H
\right|
\le
C\delta
\bigl(
\|e\|_{\mathcal H^m}^2+\|e\|_{\mathcal V^m}^2
\bigr).
\]

For the term involving $u_{f_N}-u_{f_L}$, write
\[
(u_{f_N}-u_{f_L})f_N
=
(u_{f_N}-u_{f_L})M
+
(u_{f_N}-u_{f_L})(f_N-M).
\]
Since $\nabla_v M=-vM$, the Maxwellian part satisfies
\[
\left|
\sum_{|\alpha|\le m}
\bigl\langle
\partial_x^\alpha\nabla_v\cdot((u_{f_N}-u_{f_L})M),
\partial_x^\alpha e
\bigr\rangle_H
\right|
\le
C\|e\|_{\mathcal H^m}^2.
\]
The remaining part is perturbative. Using~\eqref{eq:vfp-smallness} and Young's inequality,
\[
\left|
\sum_{|\alpha|\le m}
\bigl\langle
\partial_x^\alpha\nabla_v\cdot((u_{f_N}-u_{f_L})(f_N-M)),
\partial_x^\alpha e
\bigr\rangle_H
\right|
\le
C\delta
\bigl(
\|e\|_{\mathcal H^m}^2+\|e\|_{\mathcal V^m}^2
\bigr).
\]
Consequently,
\[
\left|
\sum_{|\alpha|\le m}
\bigl\langle
\partial_x^\alpha\nabla_v\cdot((u_{f_N}-u_{f_L})f_N),
\partial_x^\alpha e
\bigr\rangle_H
\right|
\le
C\|e\|_{\mathcal H^m}^2
+
C\delta
\bigl(
\|e\|_{\mathcal H^m}^2+\|e\|_{\mathcal V^m}^2
\bigr).
\]

For the temperature term, write
\[
(T_{f_N}-T_{f_L})\nabla_v f_N
=
(T_{f_N}-T_{f_L})\nabla_v M
+
(T_{f_N}-T_{f_L})\nabla_v(f_N-M).
\]
The first contribution is bounded by
\[
\left|
\sum_{|\alpha|\le m}
\bigl\langle
\partial_x^\alpha\nabla_v\cdot((T_{f_N}-T_{f_L})\nabla_v M),
\partial_x^\alpha e
\bigr\rangle_H
\right|
\le
C\|e\|_{\mathcal H^m}^2.
\]
For the second contribution, one obtains, for every $\kappa>0$,
\begin{equation*}
\begin{aligned}
&\left|
\sum_{|\alpha|\le m}
\bigl\langle
\partial_x^\alpha\nabla_v\cdot((T_{f_N}-T_{f_L})\nabla_v(f_N-M)),
\partial_x^\alpha e
\bigr\rangle_H
\right|
\le\\
&\kappa\|e\|_{\mathcal V^m}^2
+
C_\kappa
\left(\delta + \|f_N-M\|_{\mathcal V^m}\right)^2
\|e\|_{\mathcal H^m}^2.
\end{aligned}
\end{equation*}
Thus
\begin{equation*}
\begin{aligned}
&\left|
\sum_{|\alpha|\le m}
\bigl\langle
\partial_x^\alpha\nabla_v\cdot((T_{f_N}-T_{f_L})\nabla_v f_N),
\partial_x^\alpha e
\bigr\rangle_H
\right|
\le\\
&C\|e\|_{\mathcal H^m}^2
+
\kappa\|e\|_{\mathcal V^m}^2
+
C_\kappa
\left(\delta + \|f_N-M\|_{\mathcal V^m}\right)^2
\|e\|_{\mathcal H^m}^2.
\end{aligned}
\end{equation*}

Finally, the residual term is estimated by Cauchy--Schwarz and Young:
\[
\left|
\sum_{|\alpha|\le m}
\bigl\langle
\partial_x^\alpha R_N,\partial_x^\alpha e
\bigr\rangle_H
\right|
\le
\frac12\|R_N\|_{\mathcal H^m}^2
+
\frac12\|e\|_{\mathcal H^m}^2.
\]

Collecting the estimates and using $\nu(z)\in[\nu_*,\nu^*]$, we obtain
\begin{equation*}
\begin{aligned}
&\frac12\frac{d}{dt}\|e(t)\|_{\mathcal H^m}^2
+
\nu_*\|e(t)\|_{\mathcal V^m}^2
\le
\nu^*( C\delta\|e(t)\|_{\mathcal V^m}^2\\
&+
\kappa\|e(t)\|_{\mathcal V^m}^2
+
C\bigl(1+(\delta + \|f_N(t)-M\|_{\mathcal V^m})^2\bigr)
\|e(t)\|_{\mathcal H^m}^2)
+
\frac12\|R_N(t)\|_{\mathcal H^m}^2.
\end{aligned}
\end{equation*}
Choosing first $\kappa>0$ and then $\delta>0$ sufficiently small,
depending only on $\nu_*$ and $\nu^*$, the $\mathcal V^m$ terms on the
right-hand side are absorbed into the left-hand side.
Hence
\[
\frac12\frac{d}{dt}\|e(t)\|_{\mathcal H^m}^2
+
\frac{\nu_*}{2}\|e(t)\|_{\mathcal V^m}^2
\le
\nu^*C\bigl(1+( \delta + \|f_N(t)-M\|_{\mathcal V^m})^2\bigr)
\|e(t)\|_{\mathcal H^m}^2
+
\frac12\|R_N(t)\|_{\mathcal H^m}^2.
\]
By the uniform bound $\|f_N-M\|_{L^2(0,T;\mathcal V^m)}\le K_V$, Gronwall's lemma yields
\[
\|e\|_{L^\infty(0,T;\mathcal H^m)}^2
+
\nu_*\|e\|_{L^2(0,T;\mathcal V^m)}^2
\le
C_{\mathrm{FP}}^2
\left(
\|e(0)\|_{\mathcal H^m}^2
+
\|R_N\|_{L^2(0,T;\mathcal H^m)}^2
\right),
\]
which gives~\eqref{eq:fp-pointwise-est}. Taking the $L^2(\Omega)$ norm, since the constant is independent of $z$, gives \eqref{eq:fp-l2omega-est}.
\end{proof}

\subsection{Boltzmann--BGK surrogate}\label{sec:bgk}
BGK models provide a classical reduced kinetic description for Boltzmann-type
dynamics and are the low-fidelity models used in the rarefied-gas neural
control-variate setting of \cite{PareschiChenDimarco}.
The reduced model is naturally written as
\begin{equation}\label{eq:bgk-low-fidelity}
\partial_t f_L + v\cdot\nabla_x f_L
=
\nu(z)\bigl(\mathcal M[f_L]-f_L\bigr),
\qquad
f_L(0,x,v;z)=f_0(x,v;z),
\end{equation}
where
\[
\mathcal M[f](x,v)
=
\frac{\rho_f(x)}
     {(2\pi T_f(x))^{d_v/2}}
\exp\left(
-\frac{|v-u_f(x)|^2}{2T_f(x)}
\right)
\]
is the local Maxwellian associated with the moments of $f$.
Near the global Maxwellian, local stability and hypocoercive estimates for
relaxation and BGK-type models are classical; see, for example,~\cite{Herau2006,AchleitnerArnoldCarlen2018}.

\begin{proposition}[Surrogate estimate for nonlinear BGK]\label{prop:bgk-surrogate}
Let $f_L$ solve~\eqref{eq:bgk-low-fidelity}, and let the neural surrogate $f_N$ have residual
\begin{equation}\label{eq:bgk-residual}
\mathcal R_{\mathrm{BGK}}(f_N;z)
:=
\partial_t f_N + v\cdot\nabla_x f_N - \nu(z)\bigl(\mathcal M[f_N]-f_N\bigr).
\end{equation}
Assume that Assumption~\ref{ass:perturbative} holds. Then there exists a constant
\[
C_{\mathrm{BGK}}
=
C(T,\nu_*,\nu^*,m,C_{\rm mom},c_0,\delta)>0
\]
such that
\begin{equation}\label{eq:bgk-pointwise-est}
\|f_N-f_L\|_{L^\infty(0,T;\mathcal H^m)}
\le
C_{\mathrm{BGK}}
\Bigl(
\|f_N(0)-f_0(z)\|_{\mathcal H^m}
+
\|\mathcal R_{\mathrm{BGK}}(f_N;z)\|_{L^2(0,T;\mathcal H^m)}
\Bigr).
\end{equation}
Consequently,
\begin{equation}\label{eq:bgk-l2omega-est}
\|f_N-f_L\|_{L^2(\Omega;L^\infty(0,T;\mathcal H^m))}
\le
C_{\mathrm{BGK}}\bigl(\eta_{0,N}^{\mathrm{BGK}}+\delta_N^{\mathrm{BGK}}\bigr),
\end{equation}
where
\[
\eta_{0,N}^{\mathrm{BGK}}
:=
\|f_N(0)-f_0(z)\|_{L^2(\Omega;\mathcal H^m)},
\qquad
\delta_N^{\mathrm{BGK}}
:=
\|\mathcal R_{\mathrm{BGK}}(f_N;z)\|_{L^2(\Omega;L^2(0,T;\mathcal H^m))}.
\]
\end{proposition}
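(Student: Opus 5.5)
We follow the energy-estimate strategy of Proposition~\ref{prop:fp-surrogate}. Here it is simpler, because the BGK collision operator is a zeroth-order operator in $v$: no velocity-dissipation norm is needed, and a plain $\mathcal H^m$ energy estimate combined with Gronwall's lemma suffices. Set $e:=f_N-f_L$ and $R_N:=\mathcal R_{\mathrm{BGK}}(f_N;z)$. Subtracting \eqref{eq:bgk-low-fidelity} from \eqref{eq:bgk-residual} gives
\[
\partial_t e+v\cdot\nabla_x e=\nu(z)\bigl(\mathcal M[f_N]-\mathcal M[f_L]\bigr)-\nu(z)e+R_N,
\qquad e|_{t=0}=f_N(0)-f_0(z).
\]
For $|\alpha|\le m$ we apply $\partial_x^\alpha$, take the $H$-inner product with $\partial_x^\alpha e$, and sum over $\alpha$. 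By periodicity the transport term vanishes. The damping term contributes $-\nu(z)\|e\|_{\mathcal H^m}^2\le 0$, which we may discard. The residual term is bounded by $\tfrac12\|R_N\|_{\mathcal H^m}^2+\tfrac12\|e\|_{\mathcal H^m}^2$.

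The main step is a Lipschitz estimate for the local Maxwellian:
\[
\|\mathcal M[f_N]-\mathcal M[f_L]\|_{\mathcal H^m}\le C\|e\|_{\mathcal H^m}.
\]
To prove it, write $\mathcal M[f]=\mathcal G(\rho_f,u_f,T_f;v)$ with $\mathcal G(\rho,u,T;v)=\rho(2\pi T)^{-d_v/2}e^{-|v-u|^2/(2T)}$, and use
\[
\mathcal M[f_N]-\mathcal M[f_L]=\int_0^1 D_{(\rho,u,T)}\mathcal G\bigl(\theta U_N+(1-\theta)U_L\bigr)\,d\theta\cdot(U_N-U_L),
\]
where $U=(\rho,u,T)$. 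By \eqref{eq:moment-lipschitz}, $\|U_N-U_L\|_{H^m_x}\le C_{\rm mom}\|e\|_{\mathcal H^m}$. It then remains to bound $x$-derivatives up to order $m$ of the coefficient $D\mathcal G$ in the weighted norm $L^2(M^{-1}dv)$.

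This weighted bound is the expected obstacle. The Gaussian weight $M^{-1}=(2\pi)^{d_v/2}e^{|v|^2/2}$ makes $\int |\partial^\beta_U\mathcal G|^2M^{-1}dv$ finite only if $T<2$ (and this holds uniformly for $T$ near $1$). Here the smallness \eqref{eq:moment-smallness} enters: $\|T_f-1\|_{H^m_x}+\|u_f\|_{H^m_x}\le C\delta$, and $H^m_x\hookrightarrow L^\infty$ since $m>d_x/2+1$. So for $\delta$ small, $T_{f_N},T_{f_L}\in[1-C\delta,1+C\delta]$ and $|u|\le C\delta$ uniformly in $x$, and the convex combinations stay in the same range. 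The lower bounds $\rho,T\ge c_0$ control the negative powers of $T$. Under these conditions every $v$-dependent factor of the form $\mathrm{poly}(v)\,e^{-|v-u|^2/(2T)}$ lies in $L^2(M^{-1}dv)$ with uniform constants.

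Spatial derivatives are handled with Faà di Bruno and the algebra property of $H^m_x$. Each $\partial_x^\alpha$ falling on the coefficient produces products of derivatives of $U_N,U_L$, which are bounded in $H^m_x$ by $C(1+\delta)$. We distribute derivatives by a Moser-type product estimate, putting the lower-order factors in $L^\infty$ via the embedding. This yields
\[
\sum_{|\alpha|\le m}\bigl|\langle\partial_x^\alpha(\mathcal M[f_N]-\mathcal M[f_L]),\partial_x^\alpha e\rangle_H\bigr|\le C\|e\|_{\mathcal H^m}^2,
\]
with $C$ depending on $m,C_{\rm mom},c_0,\delta$.

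Collecting terms gives
\[
\tfrac12\frac{d}{dt}\|e\|_{\mathcal H^m}^2\le(\nu^*C+\tfrac12)\|e\|_{\mathcal H^m}^2+\tfrac12\|R_N\|_{\mathcal H^m}^2.
\]
Gronwall's lemma on $[0,T]$ then yields \eqref{eq:bgk-pointwise-est} with $C_{\mathrm{BGK}}=C(T,\nu^*,m,C_{\rm mom},c_0,\delta)$. Since this constant is independent of $z$, squaring, integrating over $\Omega$, and using Minkowski's inequality gives \eqref{eq:bgk-l2omega-est}.
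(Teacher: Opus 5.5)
Your argument is correct, and its skeleton coincides with the paper's: the $\mathcal H^m$ energy identity with the transport term vanishing by periodicity, Young's inequality on the residual, Gronwall with a $z$-independent constant, and then the $L^2(\Omega)$ norm. The difference lies in how you treat the collision term.

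The paper Taylor-expands around the global Maxwellian, writing $\mathcal M[f_N]-\mathcal M[f_L]=\Pi e+\mathcal N_{\mathrm{BGK}}(e;f_L,f_N)$. Here $\Pi$ is the $H$-orthogonal projection onto $\mathrm{span}\{M,vM,|v|^2M\}$ and $\|\mathcal N_{\mathrm{BGK}}\|_{\mathcal H^m}\le C\delta\|e\|_{\mathcal H^m}$. The collision term then becomes the linearized BGK operator $\nu(\Pi-I)e$, which contributes $-\nu\|(I-\Pi)e\|_{\mathcal H^m}^2\le 0$, plus an $O(\nu^*\delta)$ perturbation. You instead bound the whole difference $\mathcal M[f_N]-\mathcal M[f_L]$ by a Lipschitz constant, using the mean-value formula in the macroscopic variables, and you discard the damping $-\nu\|e\|_{\mathcal H^m}^2$.

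Your route is more elementary. It needs only the Lipschitz bound \eqref{eq:moment-lipschitz} exactly as stated in Assumption~\ref{ass:perturbative}. The paper's $O(\delta)$ remainder bound uses more: that the moment maps are $C^1$ near $M$ with locally Lipschitz derivative, so that their linearization at $M$ yields $\Pi e$. You also make explicit the weighted-integrability condition $T<2$ behind $\mathcal M[f]\in\mathcal H^m$. This is where the smallness of $\delta$ genuinely enters, and the paper leaves it implicit in its claim that the Maxwellian map is $C^1$ into $\mathcal H^m$.

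The paper's splitting, in turn, shows that the linear part of the collision term produces no growth. As a result, the collision term feeds only $O(\nu^*\delta)$ into the Gronwall exponent, against $O(\nu^*)$ in your estimate. For fixed $\nu^*$ and $T$ this is immaterial for the stated result. It is, however, the form better suited to large collision frequencies, such as the $\nu/\varepsilon$ scaling of Section~\ref{sec:two-level-ap}, and to hypocoercive refinements.
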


\begin{proof}
Set
\[
e:=f_N-f_L,
\qquad
R_N:=\mathcal R_{\mathrm{BGK}}(f_N;z).
\]
Subtracting the equations gives
\[
\partial_t e + v\cdot\nabla_x e
=
\nu(z)\bigl(\mathcal M[f_N]-\mathcal M[f_L]-e\bigr)
+
R_N,
\qquad
e|_{t=0}=f_N(0)-f_0(z).
\]
In the admissible perturbative regime, the Maxwellian map is $C^1$ with
locally Lipschitz derivative from the macroscopic variables into
$\mathcal H^m$. A Taylor expansion around the global Maxwellian $M$
therefore gives
\[
\mathcal M[f_N]-\mathcal M[f_L]
=
\Pi e+\mathcal N_{\mathrm{BGK}}(e;f_L,f_N),
\]
where $\Pi$ is the orthogonal projection in $H$ onto the space generated
by the collision invariants. More precisely,
\[
\Pi e
=
M\left(
\rho_e+m_e\cdot v
+\frac{\theta_e}{2}\bigl(|v|^2-d_v\bigr)
\right),
\]
with
\[
\rho_e
=
\int_{\mathbb R^{d_v}}e\,dv,
\qquad
m_e
=
\int_{\mathbb R^{d_v}}ve\,dv,
\qquad
\theta_e
=
\frac{1}{d_v}
\int_{\mathbb R^{d_v}}
\bigl(|v|^2-d_v\bigr)e\,dv.
\]
The nonlinear remainder satisfies
\[
\|\mathcal N_{\mathrm{BGK}}(e;f_L,f_N)\|_{\mathcal H^m}
\le
C\delta\|e\|_{\mathcal H^m},
\]
where the constant depends on the admissibility bounds for $\rho_f$ and
$T_f$, on $m$, and on the local Lipschitz constants of the moment maps.

Hence the error equation becomes
\[
\partial_t e + v\cdot\nabla_x e
=
\nu(z)(\Pi e-e)
+
\nu(z)\mathcal N_{\mathrm{BGK}}(e;f_L,f_N)
+
R_N.
\]
Apply $\partial_x^\alpha$ for $|\alpha|\le m$, take the $H$-inner product with
$\partial_x^\alpha e$, and sum over $|\alpha|\le m$. By periodicity in $x$, the transport
term vanishes, while the linearized BGK part gives
\[
\sum_{|\alpha|\le m}
\langle \partial_x^\alpha(\Pi e-e),\partial_x^\alpha e\rangle_H
=
-\|(I-\Pi)e\|_{\mathcal H^m}^2.
\]
Therefore
\[
\frac12\frac{d}{dt}\|e(t)\|_{\mathcal H^m}^2
+
\nu_*\|(I-\Pi)e(t)\|_{\mathcal H^m}^2
\le
C \nu^*\delta \|e(t)\|_{\mathcal H^m}^2
+
\frac12\|R_N(t)\|_{\mathcal H^m}^2
+
\frac12\|e(t)\|_{\mathcal H^m}^2.
\]
Thus
\[
\frac{d}{dt}\|e(t)\|_{\mathcal H^m}^2
\le
C\|e(t)\|_{\mathcal H^m}^2
+
\|R_N(t)\|_{\mathcal H^m}^2,
\]
where $C$ depends on $\nu^*,m,C_{\rm mom},c_0,\delta$.
Gronwall's lemma then yields
\[
\|e\|_{L^\infty(0,T;\mathcal H^m)}^2
\le
C_{\mathrm{BGK}}^2
\left(
\|e(0)\|_{\mathcal H^m}^2
+
\|R_N\|_{L^2(0,T;\mathcal H^m)}^2
\right),
\]
which gives~\eqref{eq:bgk-pointwise-est}. Taking the $L^2(\Omega)$ norm gives
\eqref{eq:bgk-l2omega-est}.
\end{proof}

\subsection{Two-level variance and AP stability for neural control variates}
\label{sec:two-level-ap}

We now combine the observable estimates with
the stability bounds for the reduced kinetic surrogates, keeping explicit the
kinetic scaling parameter. Let $\varepsilon>0$ denote a Knudsen-type
parameter associated with the fluid limit. Equivalently, in the notation of
Sections~\ref{sec:vfp}--\ref{sec:bgk}, the collision frequency may be read as
$\nu(z)/\varepsilon$, with $0<\nu_*\leq \nu(z)\leq \nu^*$. The limit
$\varepsilon\to0$ corresponds to the strongly collisional regime in which
the kinetic dynamics converges to its limiting Euler model~\cite{Villani2002survey}.

For a fixed scalar observable $\mathcal Q$, we write
\[
        Q_H^\varepsilon=\mathcal Q(f_H^\varepsilon),
        \qquad
        Q_L^\varepsilon=\mathcal Q(f_L^\varepsilon),
        \qquad
        Q_N^\varepsilon=\mathcal Q(f_N^\varepsilon),
\]
for the high-fidelity, reduced, and neural observables, respectively. We also
denote by $Q_E$ the corresponding observable of the limiting Euler model.
The results below are stated for each fixed $\varepsilon>0$. Their constants
may depend on $\varepsilon$, unless the underlying stability estimates are
uniform in the fluid scaling.

\begin{corollary}[Variance bound for Fokker--Planck and BGK surrogates]
\label{cor:concrete-variance}
Let $\mathcal Q$ be the final-time moment functional
\eqref{eq:qoi-final-time-moment}, and assume that
\[
\Var(Q_L^\varepsilon)\ge \sigma_0>0,
\qquad
\Var(Q_N^\varepsilon)\ge \frac{\sigma_0}{2}.
\]
If either Proposition~\ref{prop:fp-surrogate} or
Proposition~\ref{prop:bgk-surrogate} applies to the scaled reduced model, then
there exists a constant $C_\varepsilon>0$ such that
\begin{equation}\label{eq:concrete-variance-bound}
\Var(\mathcal E_N^*[Q_H^\varepsilon])
\le
\Var(\mathcal E_L^*[Q_H^\varepsilon])
+
C_\varepsilon
\Bigl(
(\delta_N^\varepsilon+\eta_{0,N}^\varepsilon)
+
(\delta_N^\varepsilon+\eta_{0,N}^\varepsilon)^2
\Bigr),
\end{equation}
where
\[
(\delta_N^\varepsilon,\eta_{0,N}^\varepsilon)
=
\begin{cases}
(\delta_N^{{\rm FP},\varepsilon},\eta_{0,N}^{{\rm FP},\varepsilon}),
& \text{for the Fokker--Planck surrogate},\\
(\delta_N^{{\rm BGK},\varepsilon},\eta_{0,N}^{{\rm BGK},\varepsilon}),
& \text{for the BGK surrogate}.
\end{cases}
\] The constant $C_\varepsilon$ depends on $M_H$, $M_L$, $\|Q_H^\varepsilon\|_{L^2}$,
$\|Q_L^\varepsilon\|_{L^2}$, $\sigma_0$,
$\|\psi\|_{L^2(M\,dx\,dv)}$, and on the corresponding scaled perturbative
stability constant.
\end{corollary}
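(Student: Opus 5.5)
The argument is a direct chaining of three earlier estimates, carried out at a fixed $\varepsilon>0$. First, the two variance lower bounds assumed in the statement are exactly Assumption~\ref{ass:nondegenerate-variance} for the pair $(Q_L^\varepsilon,Q_N^\varepsilon)$. Hence Theorem~\ref{thm:variance-stability} applies with $Q_H,Q_L,Q_N$ replaced by their $\varepsilon$-versions, and gives
\[
\Var(\mathcal E_N^*[Q_H^\varepsilon])
\le
\Var(\mathcal E_L^*[Q_H^\varepsilon])
+
C_1\bigl(e_\varepsilon+e_\varepsilon^2\bigr),
\qquad
e_\varepsilon:=\norm{Q_N^\varepsilon-Q_L^\varepsilon}_{L^2(\Omega)}.
\]
Here $C_1$ depends only on $M_H,M_L,\sigma_0,\norm{Q_H^\varepsilon}_{L^2},\norm{Q_L^\varepsilon}_{L^2}$. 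We keep only the one-sided inequality obtained by dropping the absolute value.

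Second, since $\mathcal Q$ is the final-time moment \eqref{eq:qoi-final-time-moment}, the Lipschitz bound \eqref{eq:qoi-observable-error} converts the observable error into a kinetic error:
\[
e_\varepsilon\le \norm{\psi}_{L^2(M\,dx\,dv)}\,\norm{f_N^\varepsilon-f_L^\varepsilon}_{L^2(\Omega;L^\infty(0,T;\mathcal H^m))}.
\]
Third, we apply whichever of Proposition~\ref{prop:fp-surrogate} or Proposition~\ref{prop:bgk-surrogate} holds for the scaled model, that is, with collision frequency $\nu(z)/\varepsilon$. This gives
\[
\norm{f_N^\varepsilon-f_L^\varepsilon}_{L^2(\Omega;L^\infty(0,T;\mathcal H^m))}
\le C_{\rm stab}^\varepsilon\bigl(\eta_{0,N}^\varepsilon+\delta_N^\varepsilon\bigr),
\]
where $C_{\rm stab}^\varepsilon$ is $C_{\rm FP}$ or $C_{\rm BGK}$ evaluated with bounds $\nu_*/\varepsilon$ and $\nu^*/\varepsilon$. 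Combining the last two displays yields $e_\varepsilon\le a\,s$ with $a:=\norm{\psi}C_{\rm stab}^\varepsilon$ and $s:=\eta_{0,N}^\varepsilon+\delta_N^\varepsilon$. Since $t\mapsto t+t^2$ is increasing on $[0,\infty)$,
\[
e_\varepsilon+e_\varepsilon^2\le a s+a^2s^2\le \max(a,a^2)\,(s+s^2).
\]
Setting $C_\varepsilon:=C_1\max(a,a^2)$ gives \eqref{eq:concrete-variance-bound}, with exactly the stated dependences.

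The only delicate point is the third step. We must check that the hypotheses of the surrogate propositions remain meaningful under the rescaling $\nu\mapsto\nu/\varepsilon$. These are the smallness threshold on $\delta$, Assumption~\ref{ass:perturbative}, and, for FP, the bound $K_V$. In the FP case the absorption step requires choosing $\kappa$ and $\delta$ depending on $\nu_*/\varepsilon$ and $\nu^*/\varepsilon$, and the Gronwall constant grows with $\nu^*/\varepsilon$. Similarly, the BGK Gronwall constant involves $C\nu^*\delta/\varepsilon$. Because the corollary is stated for each fixed $\varepsilon$ and allows $C_\varepsilon$ to depend on $\varepsilon$, I would simply invoke the propositions as stated for the scaled model, which is exactly what the hypothesis ``applies to the scaled reduced model'' grants. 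I would then record that $C_\varepsilon$ inherits this possibly non-uniform dependence, with no uniformity in $\varepsilon$ claimed.
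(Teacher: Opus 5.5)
Your proposal is correct and follows the paper's proof: you bound $\|Q_N^\varepsilon-Q_L^\varepsilon\|_{L^2(\Omega)}$ through \eqref{eq:qoi-observable-error}, control the kinetic error by Proposition~\ref{prop:fp-surrogate} or Proposition~\ref{prop:bgk-surrogate} applied to the scaled model, and insert the result into Theorem~\ref{thm:variance-stability}. Your explicit step $e_\varepsilon+e_\varepsilon^2\le\max(a,a^2)\,(s+s^2)$ and your note that $C_\varepsilon$ need not be uniform in $\varepsilon$ spell out constant bookkeeping that the paper leaves implicit.
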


\begin{proof}
By \eqref{eq:qoi-observable-error},
\[
\|Q_N^\varepsilon-Q_L^\varepsilon\|_{L^2(\Omega)}
\le
\|\psi\|_{L^2(M\,dx\,dv)}
\|f_N^\varepsilon-f_L^\varepsilon\|_
{L^2(\Omega;L^\infty(0,T;\mathcal H^m))}.
\]
If Proposition~\ref{prop:fp-surrogate} applies, then
\[
\|f_N^\varepsilon-f_L^\varepsilon\|_
{L^2(\Omega;L^\infty(0,T;\mathcal H^m))}
\le
C_{\mathrm{FP}}^\varepsilon
\bigl(\delta_N^{{\rm FP},\varepsilon}
+\eta_{0,N}^{{\rm FP},\varepsilon}\bigr),
\]
whereas Proposition~\ref{prop:bgk-surrogate} gives
\[
\|f_N^\varepsilon-f_L^\varepsilon\|_
{L^2(\Omega;L^\infty(0,T;\mathcal H^m))}
\le
C_{\mathrm{BGK}}^\varepsilon
\bigl(\delta_N^{{\rm BGK},\varepsilon}
+\eta_{0,N}^{{\rm BGK},\varepsilon}\bigr).
\]
The conclusion follows by inserting these bounds into
Theorem~\ref{thm:variance-stability}.
\end{proof}

The estimate above is perturbative with respect to the exact
low-fidelity observable. It therefore guarantees that the neural replacement does not
significantly degrade the ideal two-level variance reduction whenever the
neural observable remains close to the exact observable of the reduced model. In
the scaled regime, this perturbative statement has a direct AP interpretation.

Assume that, at the level of the chosen observable,
\[
        Q_H^\varepsilon\to Q_E,
        \qquad
        Q_L^\varepsilon\to Q_E
        \qquad \hbox{in }L^2(\Omega),
\]
with $\Var(Q_E)>0$. This is the AP consistency of the high-fidelity and
low-fidelity kinetic observables with the limiting Euler observable. If, in
addition, the neural replacement is consistent with the reduced
model, in the sense that
\[
        \|Q_N^\varepsilon-Q_L^\varepsilon\|_{L^2(\Omega)}\to0,
\]
then
\[
        Q_N^\varepsilon\to Q_E
        \qquad \hbox{in }L^2(\Omega).
\]
Consequently,
\[
        \rho_{H,N}^\varepsilon\to1,
        \qquad
        \frac{\Cov(Q_H^\varepsilon,Q_N^\varepsilon)}
        {\Var(Q_N^\varepsilon)}
        \to1.
\]
The optimal two-level coefficient therefore satisfies
\[
        \lambda_N^{*,\varepsilon}
        =
        \frac{\Cov(Q_H^\varepsilon,Q_N^\varepsilon)}
        {\Var(Q_N^\varepsilon)}
        \frac{M_L}{M_H+M_L}
        \to
        \frac{M_L}{M_H+M_L}.
\]
The factor $M_L/(M_H+M_L)$ is due to the fact that the expectation of the
control is itself estimated by Monte Carlo. If this expectation is available
exactly, or formally in the limit $M_L\to\infty$, the limiting coefficient is
$1$. Thus, in the two-level setting, the AP property is reflected in the
preservation of the limiting correlation structure and in the convergence of
the optimal control-variate coefficient to its fluid-limit value.

This limiting statement is independent of whether the residual stability constant in Corollary~\ref{cor:concrete-variance} is uniform in $\varepsilon$. A uniform version of the bound
\eqref{eq:concrete-variance-bound} transfers the AP limiting correlation of
the reduced control to the neural control. The estimate does not preclude the
more favorable case, observed in calibrated or enriched surrogates, in which
the neural control has a stronger correlation with $Q_H^\varepsilon$ than
the original low-fidelity observable.

\begin{remark}
\label{rem:architecture-residual-errors}
In the neural surrogate setting, the residual error in
Corollary~\ref{cor:concrete-variance} should be interpreted as an aggregate
architecture-dependent quantity. For micro--macro tensor architectures, it may
contain contributions from the PDE residual, moment-consistency errors between
the macro-network and the kinetic moments, velocity truncation or quadrature
errors, and, in AP settings, macroscopic consistency errors. These
contributions depend on the training loss, quadrature rule, and discretization,
and are therefore not analyzed separately here. They enter the present theory
through the observable error estimated in \eqref{eq:qoi-observable-error} and
through the residual bounds propagated by
Corollary~\ref{cor:concrete-variance}. In particular, uniform control of these
contributions with respect to $\varepsilon$ is what allows the neural
observable to retain the AP limiting correlation structure of the reduced
model. A complete tensor-neural estimate would require combining
the present stability mechanism with low-rank or tree tensor approximation
theory~\cite{Bachmayr2023Acta} and with training or generalization estimates
for separable neural architectures~\cite{ChoNamYangYunHongPark2023}.
\end{remark}

\subsection{Three-level AP neural control-variate hierarchy}
\label{sec:three-level-ap}

We now include the limiting Euler observable explicitly as an additional
control and consider the hierarchy
\[
\left(Q_H^\varepsilon,Q_N^\varepsilon,Q_E\right).
\]
The Euler level describes the limiting fluid behavior, whereas the
intermediate neural kinetic level provides a correction away from the
fluid regime.

Let
\[
V_{N,E}^\varepsilon
:=
\min_{\boldsymbol{\lambda}\in\mathbb R^2}
\operatorname{Var}
\left(
\mathcal E_{N,E}^{\boldsymbol{\lambda}}
[Q_H^\varepsilon]
\right)
\]
denote the optimal variance obtained with the two controls
\((Q_N^\varepsilon,Q_E)\), and let
\[
V_E^\varepsilon
:=
\min_{\lambda\in\mathbb R}
\operatorname{Var}
\left(
\mathcal E_E^\lambda[Q_H^\varepsilon]
\right)
\]
be the optimal variance obtained with the Euler control alone.

\begin{proposition}[AP limit of the hierarchical variance]
\label{prop:ap-hierarchical-variance}
Assume that
\[
Q_H^\varepsilon\longrightarrow Q_E
\qquad\mbox{in }L^2(\Omega),
\qquad
\operatorname{Var}(Q_E)>0.
\]
Then
\[
V_{N,E}^\varepsilon
\longrightarrow
\frac{\operatorname{Var}(Q_E)}{M_H+M_L},
\qquad
V_E^\varepsilon
\longrightarrow
\frac{\operatorname{Var}(Q_E)}{M_H+M_L},
\qquad \varepsilon\to0.
\]
In particular,
\[
V_E^\varepsilon-V_{N,E}^\varepsilon\longrightarrow0.
\]
Moreover, the optimal coefficient associated with the Euler control
alone satisfies
\[
\lambda_E^{*,\varepsilon}
=
\frac{M_L}{M_H+M_L}
\frac{\operatorname{Cov}(Q_H^\varepsilon,Q_E)}
     {\operatorname{Var}(Q_E)}
\longrightarrow
\frac{M_L}{M_H+M_L}.
\]
If the expectation of the controls is known exactly, or formally
\(M_L\to\infty\), the limiting variance is zero and
\(\lambda_E^{*,\varepsilon}\to1\).
\end{proposition}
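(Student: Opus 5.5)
The plan is to handle the Euler-only variance first, using the scalar formula \eqref{eq:optvar} from Proposition~\ref{prop:exact-two-layer-variance} with control $Q_E$. Since $\Var(Q_E)>0$ is independent of $\varepsilon$, no degeneracy issue arises. By Lemma~\ref{lem:cov-perturb}, with the roles of the observables adapted, we have
\[
\abs{\Cov(Q_H^\varepsilon,Q_E)-\Var(Q_E)}=\abs{\Cov(Q_H^\varepsilon-Q_E,Q_E)}\le \norm{Q_H^\varepsilon-Q_E}_{L^2}\norm{Q_E}_{L^2}\to0 .
\]
Similarly, $\Var(Q_H^\varepsilon)\to\Var(Q_E)$, by the variance perturbation bound \eqref{eq:var-perturb}. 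Substituting these into \eqref{eq:optvar} gives
\[
V_E^\varepsilon\to \frac{\Var(Q_E)}{M_H}-\frac{M_L}{M_H(M_H+M_L)}\Var(Q_E)=\frac{\Var(Q_E)}{M_H+M_L}.
\]
The coefficient limit for $\lambda_E^{*,\varepsilon}$ follows directly from \eqref{eq:lambda-star}.

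For the hierarchical variance I will not use the matrix formula \eqref{eq:multiple-cv-optvar}, because the covariance matrix of $(Q_N^\varepsilon,Q_E)$ may degenerate as $\varepsilon\to0$; this is exactly the nondegeneracy the statement avoids. The strategy is a squeeze based on the minimization definition instead. Restricting to $\boldsymbol\lambda=(0,\lambda)$ gives the upper bound $V_{N,E}^\varepsilon\le V_E^\varepsilon$. For a lower bound, expand \eqref{eq:multiple-cv-variance} for arbitrary $\boldsymbol\lambda$ and set $Y=\boldsymbol\lambda^T\mathbf Q$. This gives
\[
\Var(\mathcal E^{\boldsymbol\lambda})=\frac{1}{M_H}\Var(Q_H^\varepsilon-Y)+\frac{1}{M_L}\Var(Y).
\]
Next, apply the elementary bound $\min_{Y}\bigl(\frac1{M_H}\Var(X-Y)+\frac1{M_L}\Var(Y)\bigr)\ge \frac{\Var(X)}{M_H+M_L}$, which holds over \emph{all} $Y\in L^2$ and not only linear combinations of the controls. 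To see it, write $a=\sqrt{\Var(X-Y)}$ and $b=\sqrt{\Var(Y)}$. Then $\sqrt{\Var X}\le a+b$, and Cauchy--Schwarz gives $(a+b)^2\le (M_H+M_L)(a^2/M_H+b^2/M_L)$. Taking $X=Q_H^\varepsilon$ therefore yields $V_{N,E}^\varepsilon\ge \Var(Q_H^\varepsilon)/(M_H+M_L)\to\Var(Q_E)/(M_H+M_L)$.

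Combining the two bounds gives
\[
\frac{\Var(Q_H^\varepsilon)}{M_H+M_L}\le V_{N,E}^\varepsilon\le V_E^\varepsilon ,
\]
where both ends converge to the same limit. Hence $V_{N,E}^\varepsilon$ converges to that limit as well, and $V_E^\varepsilon-V_{N,E}^\varepsilon\to0$. In the case of exactly known control expectations, the same arguments with the $1/M_L$ term dropped give $V_E^\varepsilon=\Var(Q_H^\varepsilon)(1-\rho^2)/M_H\to0$ and a coefficient tending to $1$. Since variances are nonnegative, the hierarchical variance is squeezed to zero too. Note that no assumption on $Q_N^\varepsilon$ is needed beyond $L^2$.

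The main obstacle is the lower bound for $V_{N,E}^\varepsilon$ without any nondegeneracy, and the Cauchy--Schwarz step above is what resolves it. I expect the remaining steps to be routine.
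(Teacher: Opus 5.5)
Your proof is correct, and its overall structure matches the paper's. Both obtain the Euler-only limit from \eqref{eq:optvar}, get the upper bound $V_{N,E}^\varepsilon\le V_E^\varepsilon$ by taking $\boldsymbol\lambda=(0,\lambda)$, and squeeze against the floor $\Var(Q_H^\varepsilon)/(M_H+M_L)$.

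The difference is in how you derive that floor. The paper writes $V_{N,E}^\varepsilon$ through the closed form \eqref{eq:multiple-cv-optvar}. It then applies the Schur-complement inequality $(\mathbf b^\varepsilon)^T(\mathbf C^\varepsilon)^{-1}\mathbf b^\varepsilon\le\Var(Q_H^\varepsilon)$, which comes from positive semidefiniteness of the joint covariance matrix of $(Q_H^\varepsilon,Q_N^\varepsilon,Q_E)$. You instead work directly from the minimization definition. You bound $\frac1{M_H}\Var(X-Y)+\frac1{M_L}\Var(Y)$ from below for an arbitrary $Y\in L^2(\Omega)$ using the triangle inequality for centered variables and Cauchy--Schwarz.

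Your route is more general. The paper's argument avoids \emph{uniform} nondegeneracy, but it still tacitly needs $\mathbf C^\varepsilon$ to be invertible at each fixed $\varepsilon$. That fails, for instance, if $Q_N^\varepsilon$ is an affine function of $Q_E$, unless one reinterprets the formula with a pseudo-inverse. Your argument needs no invertibility at all, and it shows that the floor holds for any control, linear or not.

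The paper's route has its own merit: it reuses the optimal-variance formula already derived in Section~\ref{sec:multiple-cv}. It also ties the floor to the linear-algebraic fact that the variance explained by the controls cannot exceed $\Var(Q_H^\varepsilon)$.

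Finally, you treat the case of exactly known control expectations more explicitly than the paper, which only asserts it.
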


\begin{proof}
The optimal variance obtained with the Euler control alone is
\[
V_E^\varepsilon
=
\frac{\operatorname{Var}(Q_H^\varepsilon)}{M_H}
-
\frac{M_L}{M_H(M_H+M_L)}
\frac{\operatorname{Cov}(Q_H^\varepsilon,Q_E)^2}
     {\operatorname{Var}(Q_E)}.
\]
Since \(Q_H^\varepsilon\to Q_E\) in \(L^2(\Omega)\),
\[
\operatorname{Var}(Q_H^\varepsilon)
\longrightarrow\operatorname{Var}(Q_E),
\qquad
\operatorname{Cov}(Q_H^\varepsilon,Q_E)
\longrightarrow\operatorname{Var}(Q_E),
\]
and therefore
\[
V_E^\varepsilon
\longrightarrow
\frac{\operatorname{Var}(Q_E)}{M_H+M_L}.
\]

Since the Euler observable is one of the controls in the hierarchy,
\[
V_{N,E}^\varepsilon\le V_E^\varepsilon.
\]
On the other hand, by the optimal-variance formula
\eqref{eq:multiple-cv-optvar},
\[
V_{N,E}^\varepsilon
=
\frac{\Var(Q_H^\varepsilon)}{M_H}
-
\frac{M_L}{M_H(M_H+M_L)}
(\mathbf b^\varepsilon)^T
(\mathbf C^\varepsilon)^{-1}
\mathbf b^\varepsilon,
\]
where
\[
\mathbf b^\varepsilon
=
\begin{pmatrix}
\Cov(Q_H^\varepsilon,Q_N^\varepsilon)\\[2mm]
\Cov(Q_H^\varepsilon,Q_E)
\end{pmatrix},
\qquad
\mathbf C^\varepsilon
=
\begin{pmatrix}
\Var(Q_N^\varepsilon)
&
\Cov(Q_N^\varepsilon,Q_E)
\\[2mm]
\Cov(Q_N^\varepsilon,Q_E)
&
\Var(Q_E)
\end{pmatrix}.
\]
Since the covariance matrix of
\((Q_H^\varepsilon,Q_N^\varepsilon,Q_E)\) is positive semidefinite,
the corresponding Schur-complement inequality gives
\[
(\mathbf b^\varepsilon)^T
(\mathbf C^\varepsilon)^{-1}
\mathbf b^\varepsilon
\leq
\Var(Q_H^\varepsilon).
\]
Hence
\[
V_{N,E}^\varepsilon
\geq
\frac{\Var(Q_H^\varepsilon)}{M_H+M_L}.
\]
Passing to the limit $\varepsilon \to 0$ gives the convergence of
\(V_{N,E}^\varepsilon\), and consequently
\[
V_E^\varepsilon-V_{N,E}^\varepsilon\longrightarrow0.
\]
The convergence of \(\lambda_E^{*,\varepsilon}\) follows directly from
the optimal two-level coefficient formula.
\end{proof}

Thus, in the fluid limit, the Euler observable alone attains the same
asymptotic optimal variance as the full three-level hierarchy. The
intermediate neural kinetic control may provide additional variance
reduction away from equilibrium, but this improvement vanishes as
\(\varepsilon\to0\). Notice that no asymptotic assumption on
\(Q_N^\varepsilon\) is required for this conclusion.

\section{Numerical tests}
\label{sec:numerical-diagnostics}
The numerical tests are designed to illustrate the two main results of
the analysis. The first test examines how the error introduced by
replacing the exact low-fidelity control with its neural approximation
affects the optimal variance. The second test considers the
asymptotic-preserving hierarchy and illustrates the role of the Euler
observable in the fluid regime, while the neural kinetic control
provides a correction away from equilibrium. The full computational
setting and the neural architectures are described
in~\cite{PareschiChenDimarco,ChenDimarcoPareschiSAPNN}.
Estimator errors are measured by the discrete mean absolute difference between 
the estimated and reference means of the quantity of interest over the relevant 
physical or velocity grid, with the reference mean computed by Gauss quadrature in the random space.
The covariance terms and the optimal control-variate coefficients are
estimated from the same paired high- and low-fidelity realizations used
to construct the corresponding estimators.

\subsection{Variance stability for Boltzmann--BGK}

We consider the two-bumps test in~\cite{ChenDimarcoPareschiSAPNN}. The
high-fidelity observable is computed from the Boltzmann model, while the
low-fidelity control is obtained from the BGK model and $Q_N$ denotes its
neural approximation. Here, the quantity of interest $Q$ is the distribution function $f$.
To test the stability mechanism of
Theorem~\ref{thm:variance-stability}, we measure the discrete observable error
\[
\varepsilon_Q=\|Q_N-Q_L\|_{L^2(\Omega)}
\]
for neural surrogates of different accuracies, together with the corresponding
variation of the optimal variance,
\[
\Delta V
=
\left|
\Var(\mathcal E_N^*[Q_H])
-
\Var(\mathcal E_L^*[Q_H])
\right|.
\]
The norm used here is the discrete $L^2(\Omega)$ norm evaluated over
the random samples. The sample sizes are $M_H=5$ and $M_L=2500$, and the
optimal coefficients are computed from \eqref{eq:lambda-star}.

\begin{figure}[tb]
    \begin{center}
        \mbox{
        {\includegraphics[width = 0.45 \textwidth, trim=0 0 0 0,clip]{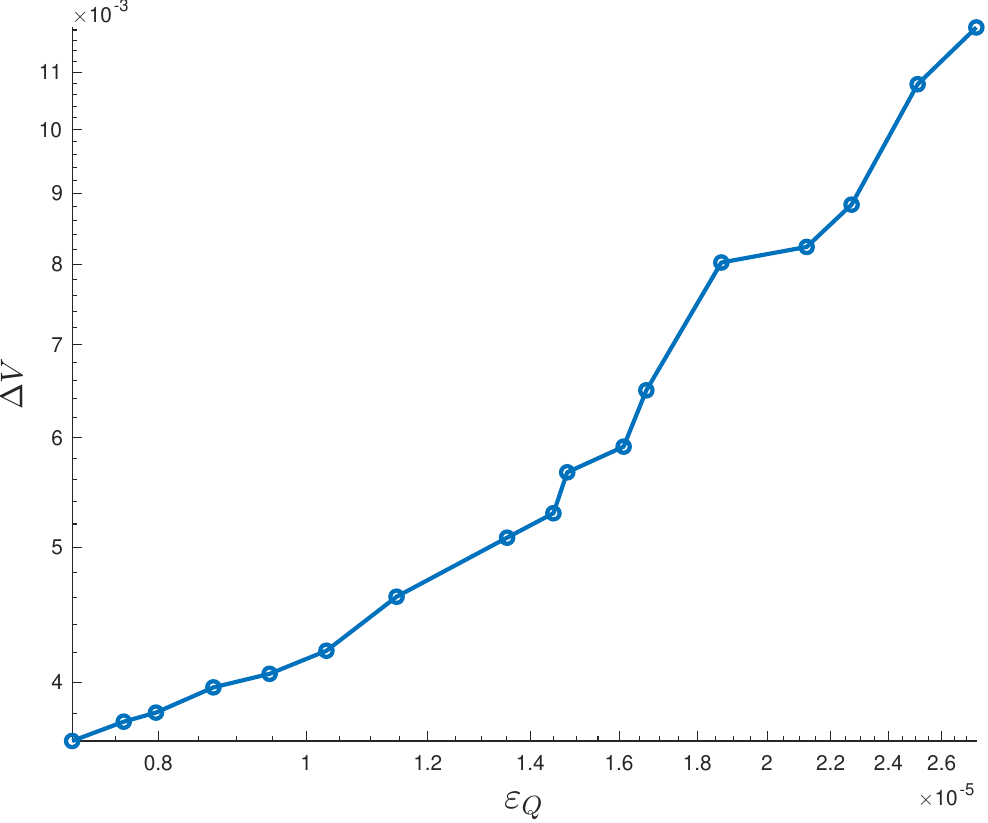}}
        {\includegraphics[width = 0.45 \textwidth, trim=0 0 0 0,clip]{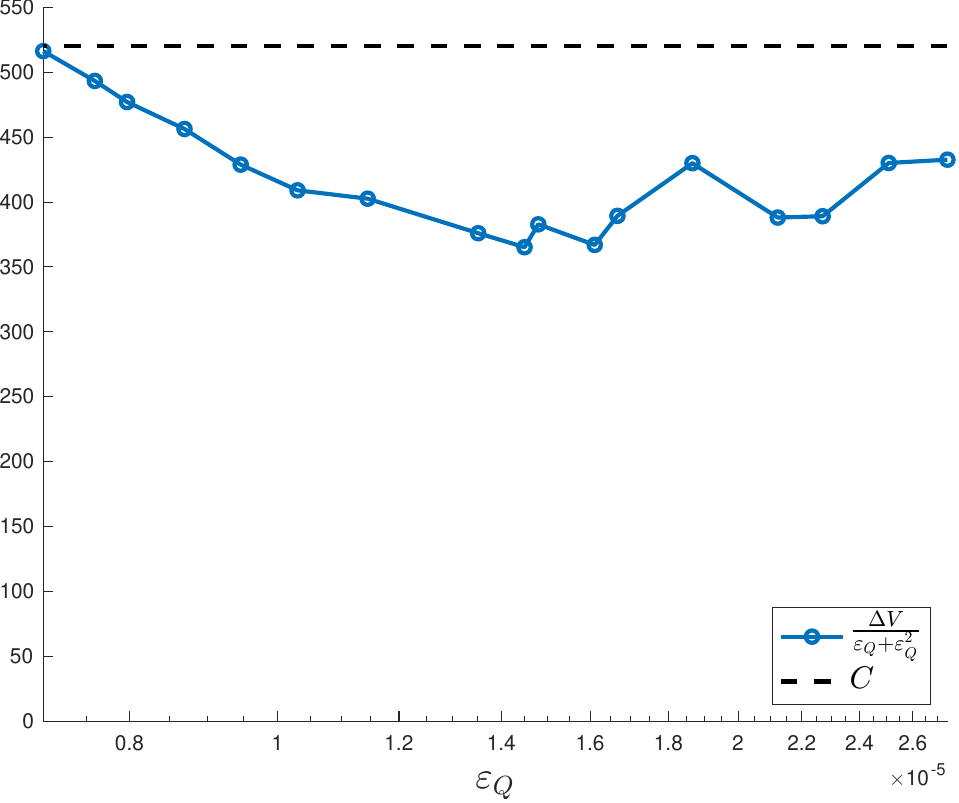}}}
        \caption{\sf Two-level variance-stability test for the Boltzmann--BGK neural control. Left: variation of the optimal variance $\Delta V(\varepsilon_Q)$ as a function of the observable error
$\varepsilon_Q$. Right: normalized ratio
$\Delta V/(\varepsilon_Q+\varepsilon_Q^2)$, compared with a constant bound.}
        \label{EEB}
    \end{center}
\end{figure}

The left panel of Fig.~\ref{EEB} shows $\Delta V$ for different neural surrogate errors $\varepsilon_Q$, indicating an approximately linear dependence over the range of errors considered. For the values reported, the right panel shows that ${\Delta V}/{(\varepsilon_Q + \varepsilon_Q^2)}$ remains below $C \approx 520$, consistently with the stability estimate of Theorem~\ref{thm:variance-stability}.

Figure~\ref{1Add} compares the performance of the Monte Carlo estimator with the exact and neural BGK control-variate estimators in the homogeneous Boltzmann--BGK test.
Both control-variate estimators reduce the error with respect to plain Monte Carlo, with the exact BGK control giving the largest improvement.
The neural BGK control exhibits the same variance-reduction trend, and the comparison between the two panels shows that a more accurate neural surrogate leads to a more effective estimator.

\begin{figure}[tb]
    \begin{center}
        \mbox{
        {\includegraphics[width = 0.45 \textwidth, trim=0 0 0 0,clip]{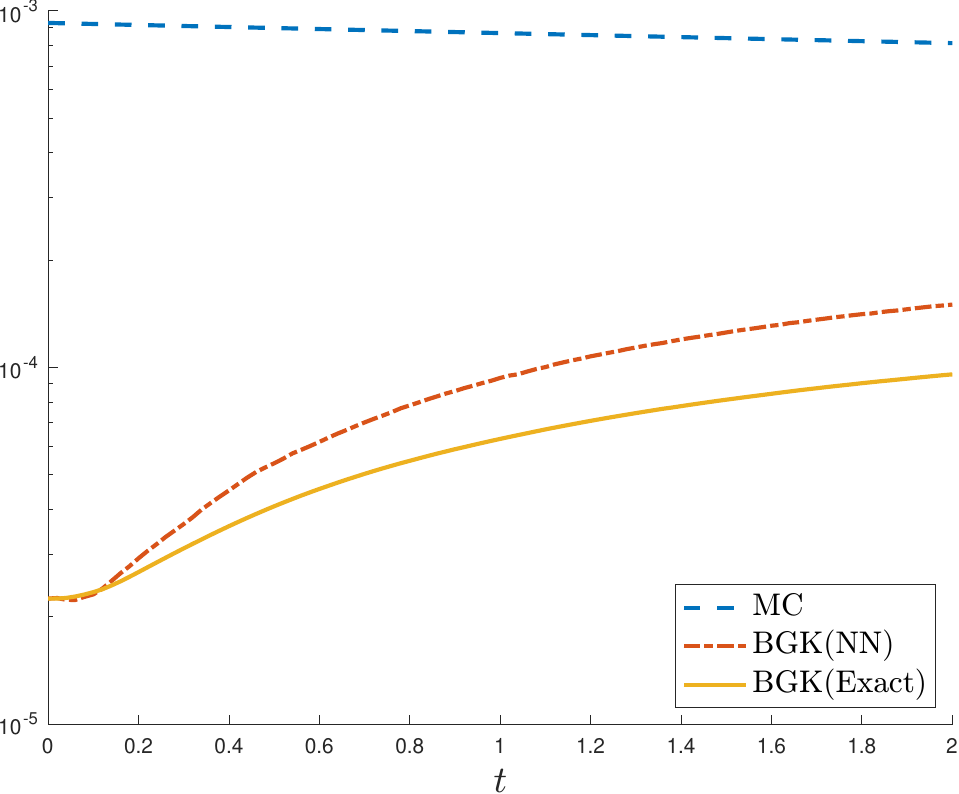}}
        {\includegraphics[width = 0.45 \textwidth, trim=0 0 0 0,clip]{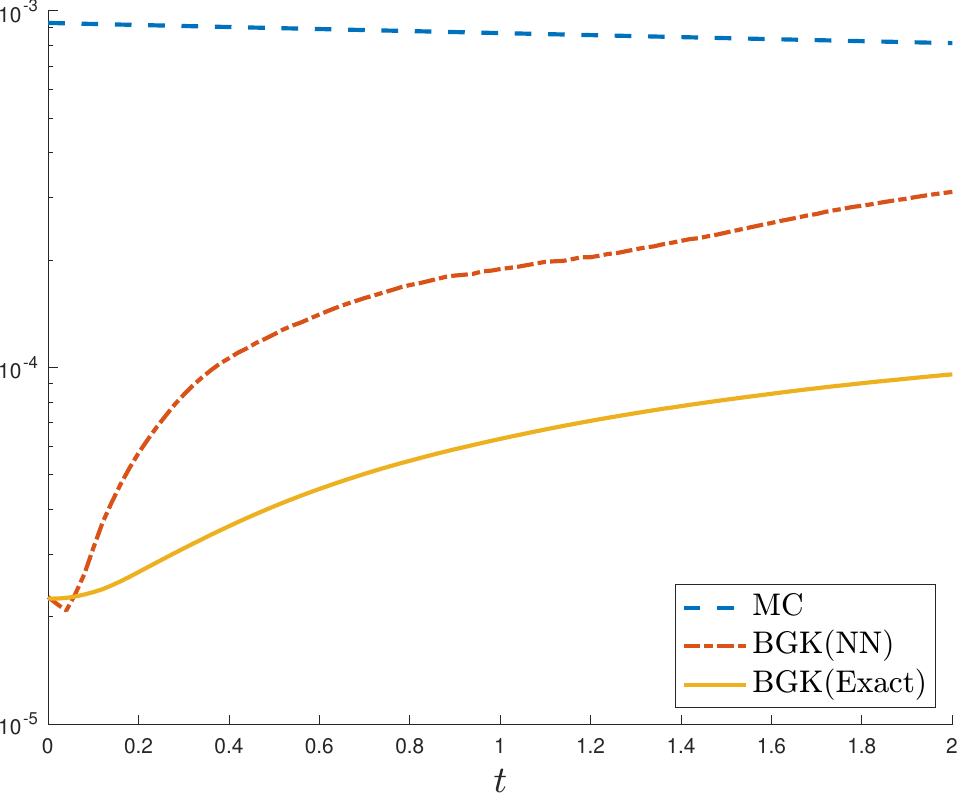}}}
        \caption{\sf Estimator performance for distribution function $f$ in the homogeneous Boltzmann--BGK test. Comparison of the Monte Carlo estimator, the neural BGK control variate, and the exact BGK control variate. Left: neural BGK surrogate with a final training loss of \(2.80\times10^{-6}\). Right: neural BGK surrogate with a final training loss of \(7.81\times10^{-6}\).}
        \label{1Add}
    \end{center}
\end{figure}

\subsection{A Vlasov--Fokker--Planck AP hierarchy test}
In the second experiment, following the asymptotic-preserving plasma setting of~\cite{ChenDimarcoPareschiSAPNN, PareschiChenDimarco}, we examine the three-level hierarchy
\[
        (Q_H^\varepsilon,\; Q_N^\varepsilon,\; Q_E)
\]
for decreasing values of the scaling parameter $\varepsilon$.
Here the estimator is applied to the macroscopic density $\rho$. The reported error norms are computed only afterward on the reconstructed expected density field. The high-fidelity observable $Q_H^\varepsilon$ is obtained from the Vlasov--Poisson--Landau (VPL) model, the neural control observable $Q_N^\varepsilon$ is obtained from a neural surrogate of the Vlasov--Poisson--Fokker--Planck (VPFP) model, and $Q_E$ is obtained from the Euler--Poisson (EP) model.

We consider the nonlinear Landau damping test in~\cite{PareschiChenDimarco} and compute the optimal coefficients $\lambda_E$ and $\lambda_D$ in the hierarchical basis
\[
        (Q_E,\; Q_N^\varepsilon-Q_E),
\]
with $M_H = 15$ and $M_L = 20000$.

The left panel of Fig.~\ref{Lambda} shows the optimal coefficients for
\[
\varepsilon\in\{10^{-4},10^{-3},10^{-2},10^{-1},1\}.
\]
Both the high-fidelity observable $Q_H^\varepsilon$ and the neural surrogate $Q_N^\varepsilon$ depend on $\varepsilon$. The results are consistent with the Euler-dominated behavior in the fluid limit, whereas the neural kinetic surrogate acts as a correction away from equilibrium.

In the right panel of Fig.~\ref{Lambda}, we fix $Q_N^\varepsilon \equiv Q_N^{1}$ and let the high-fidelity observable $Q_H^\varepsilon$ still depend on $\varepsilon$. The resulting optimal coefficients differ from those in the previous case, reflecting the lack of uniform alignment of the fixed kinetic control with the high-fidelity observable. The corresponding variance reduction is examined in Fig.~\ref{2Add}.

\begin{figure}[tb]
    \begin{center}
        \mbox{
        {\includegraphics[width = 0.45 \textwidth, trim=0 0 0 0,clip]{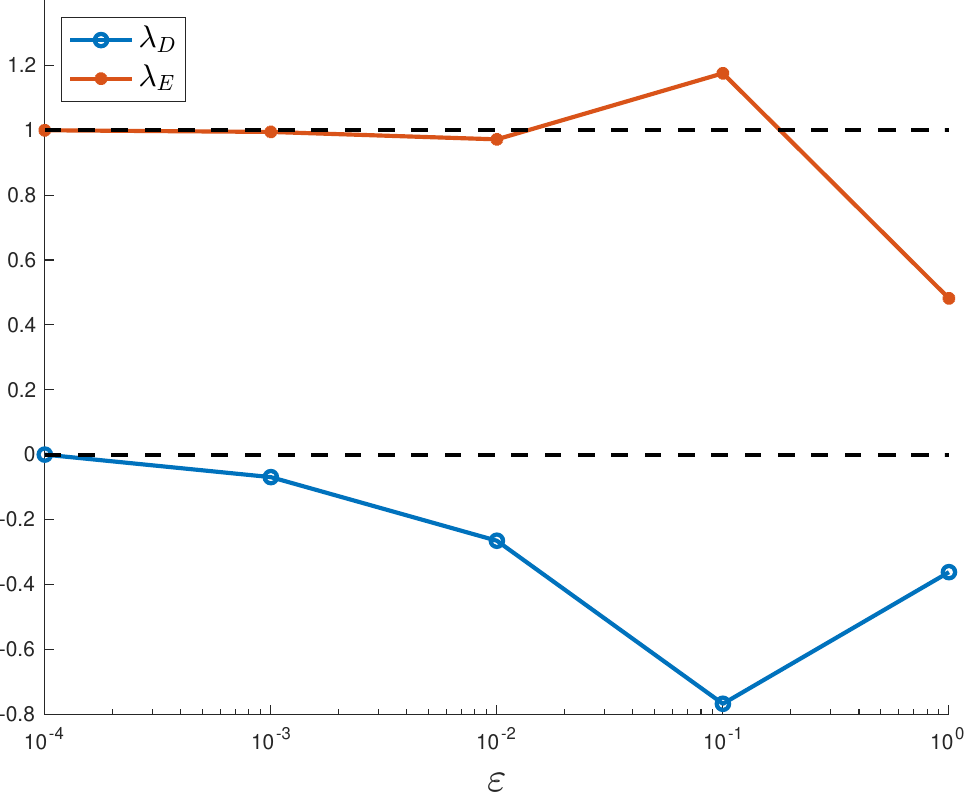}}
        {\includegraphics[width = 0.45 \textwidth, trim=0 0 0 0,clip]{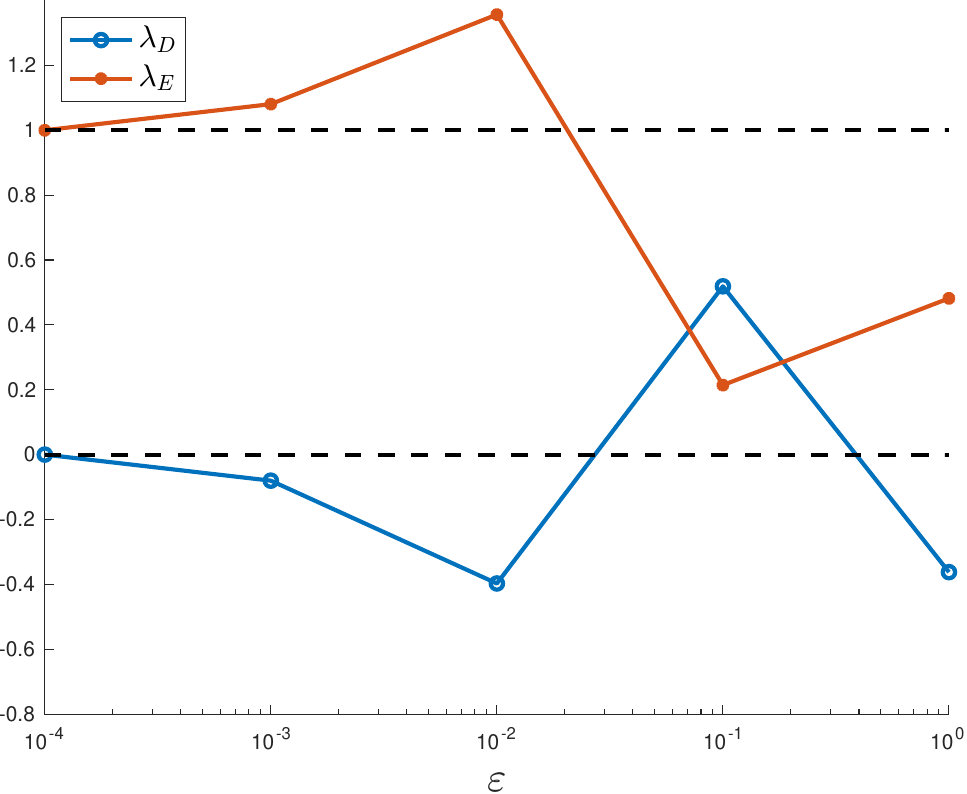}}}
        \caption{\sf Nonlinear Landau damping test for the VPFP neural control. Optimal hierarchical control variate coefficients $\lambda_D$ and $\lambda_E$ for different values of $\varepsilon$.
Left: $Q_N^\varepsilon$ is recomputed for each $\varepsilon$.
Right: $Q_N^\varepsilon\equiv Q_N^{1}$ is fixed.}
        \label{Lambda}
    \end{center}
\end{figure}

\begin{figure}[tb]
    \begin{center}
        \mbox{
        {\includegraphics[width = 0.45 \textwidth, trim=0 0 0 0,clip]{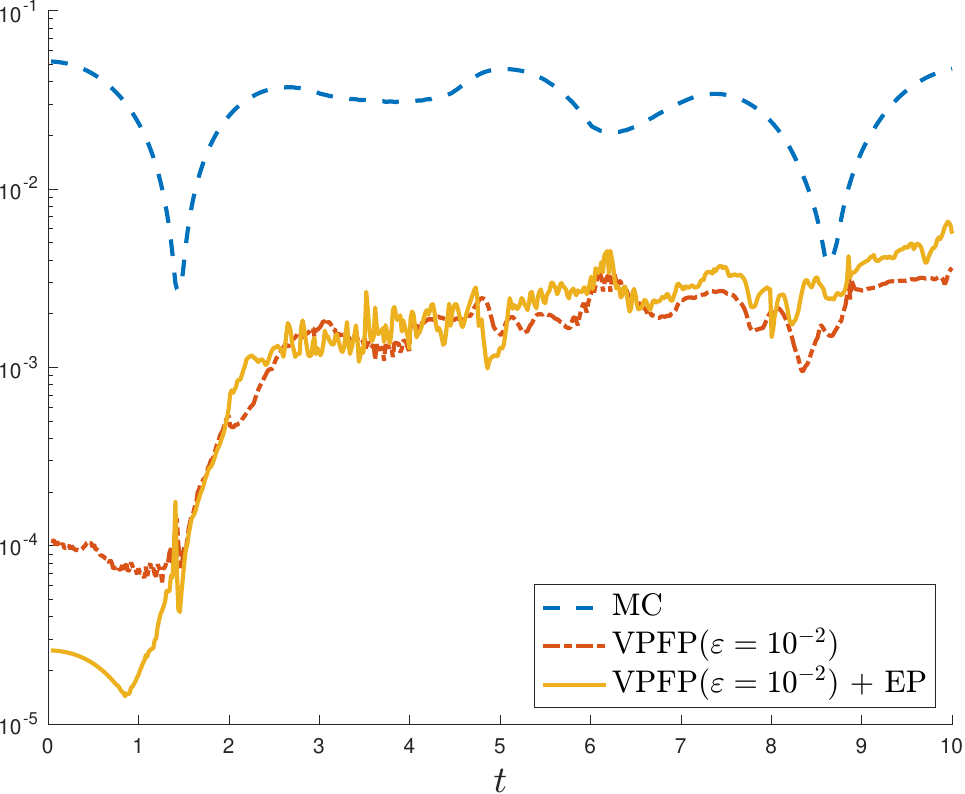}}
        {\includegraphics[width = 0.45 \textwidth, trim=0 0 0 0,clip]{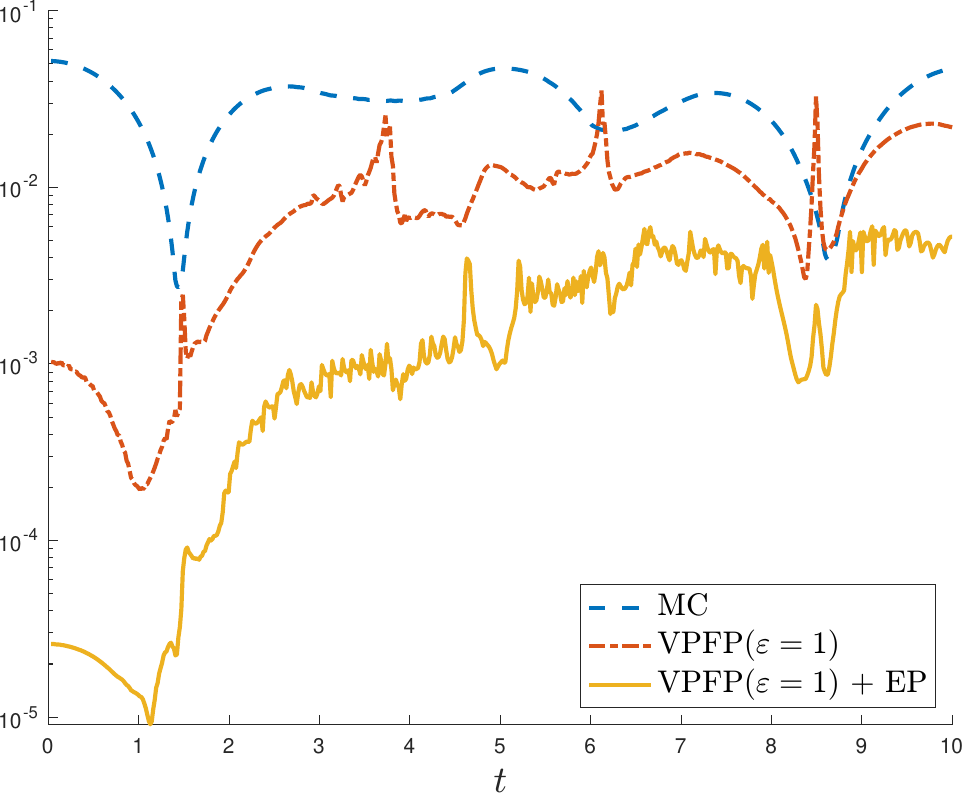}}}
        \caption{\sf Estimator errors for the macroscopic density $\rho$ in the VPL model with $\varepsilon=10^{-2}$.
Comparison of plain Monte Carlo, the neural VPFP control, and the
VPFP--EP hierarchical control. Left: the VPFP control uses the same
Knudsen number as the high-fidelity model. Right: the VPFP control is
computed with $\varepsilon=1$.}
        \label{2Add}
    \end{center}
\end{figure}

Figure~\ref{2Add} compares different uncertainty quantification estimators for the macroscopic density $\rho$ in the VPFP--EP hierarchy test, using the VPL solution at $\varepsilon=10^{-2}$ as the high-fidelity target.
The neural VPFP control variate reduces the error with respect to plain Monte Carlo when the same Knudsen number is used, as shown in the left panel.
When the control is computed instead at $\varepsilon=1$, the Knudsen-number mismatch weakens the efficiency of the kinetic control, but the estimator still improves over Monte Carlo.
The combined neural VPFP plus EP hierarchy control is particularly effective in this mismatched regime: the EP component captures the near-fluid behavior of the target solution and significantly enhances the variance reduction beyond that obtained with the kinetic control alone.
Since, for $\varepsilon=10^{-2}$, the VPFP dynamics are already close to the EP limit, the additional contribution of the EP hierarchy control is relatively limited.

Finally, we note that, as shown in our previous work~\cite{PareschiChenDimarco}, the model error may dominate the neural surrogate error at large times. This effect can be mitigated by incorporating additional high-fidelity data and using windowed training.

\section{Discussion and conclusions}
\label{sec:conclusion}
We have analyzed neural surrogate models used as approximate control observables in multifidelity uncertainty quantification for kinetic equations \cite{PareschiChenDimarco,ChenDimarcoPareschiSAPNN}. At the probabilistic level, replacing an exact low-fidelity observable $Q_L$ by an approximation $Q_N$ preserves unbiasedness and changes the optimal variance by an amount controlled by $\|Q_N-Q_L\|_{L^2(\Omega)}$. The estimate is perturbative with respect to the replacement of the
exact low-fidelity control variate by its neural approximation and does
not exclude the more favorable case in which a calibrated or enriched
neural surrogate is more strongly correlated with the high-fidelity
observable.

For final-time moment observables, the control error was connected with residual stability estimates for inhomogeneous nonlinear Fokker--Planck and BGK surrogates in a perturbative weighted Sobolev setting. The high-fidelity Boltzmann or Landau equation enters only through the observable and its covariance with the controls, whereas the stability estimate is required for the reduced model and its neural approximation. The extension to several controls yields a natural AP hierarchy. When the Euler observable is included among the controls, the optimal hierarchical variance converges to the variance associated with the Euler control in the fluid limit; the Euler coefficient converges to its limiting value and the contribution of the intermediate kinetic correction vanishes.

The numerical tests illustrate the variance stability under the replacement of $Q_L$ with $Q_N$ and the progressive concentration of the hierarchical control on the Euler level. For micro--macro tensor surrogates, the induced observable error and the residual bounds collect the errors associated with the residual, tensor approximation, moment consistency, and velocity discretization. A complete error theory would require combining the present analysis with approximation results for low-rank or tree tensor representations and with training or generalization estimates for separable neural architectures \cite{Bachmayr2023Acta,ChoNamYangYunHongPark2023}.

\subsection*{Acknowledgments}
The work of Wei Chen was partially supported by the China Scholarship Council, 
and the National Natural Science Foundation of China (NSFC) through the Research Fund for the Doctoral Program (No. 125B2021). 
Wei Chen also acknowledges the hospitality of the University of Ferrara. 
The work of Giacomo Dimarco was partially supported by the Italian Ministry of University and Research (MUR) through the PRIN
2020 project (No. 2020JLWP23) ``Integrated Mathematical Approaches to Socio–Epidemiological Dynamics”.
The work of Lorenzo Pareschi was supported by the Royal Society through the
Wolfson Fellowship ``Uncertainty quantification, data-driven simulations and
learning of multiscale complex systems governed by PDEs''. Lorenzo Pareschi also
acknowledges support from the FIS2023-01334 Advanced Grant ``Tackling
complexity: advanced numerical approaches for multiscale systems with
uncertainties'' (ADAMUS).

\end{document}